\newtheorem{theorem}{Theorem}
\newtheorem{lemma}[theorem]{Lemma}
\newtheorem{proposition}[theorem]{Proposition}
\newtheorem{problem}[theorem]{Problem}
\def\inst#1{$^{#1}$}
\begin{document}

\title{Edge-ordered Ramsey numbers\footnote{An extended abstract of this paper appeared in the proceeding of Eurocomb 2019~\cite{bv19}.}}

\author{Martin Balko\inst{1} \thanks{The first author was supported by the grant no.~18-13685Y of the Czech Science Foundation (GA\v{C}R) and by the Center for Foundations of Modern Computer Science (Charles University project UNCE/SCI/004). This article is part of a project that has received funding from the European Research Council (ERC) under the European Union's Horizon 2020 research and innovation programme (grant agreement No 810115).} 
\and
M\'{a}t\'{e} Vizer\inst{2} \thanks{The second author was supported by the Hungarian National Research, Development and Innovation Office -- NKFIH under the grant SNN 129364 and KH 130371, by the J\'anos Bolyai Research Fellowship of the Hungarian Academy of Sciences and by the New National Excellence Program under the grant number \'UNKP-19-4-BME-287.} 
}

\maketitle

\begin{center}
{\footnotesize
\inst{1} 
Department of Applied Mathematics, \\
Faculty of Mathematics and Physics, Charles University, Prague, Czech Republic \\
\texttt{balko@kam.mff.cuni.cz}
\\\ \\
\inst{2} 
Alfr\'{e}d R\'{e}nyi Institute of Mathematics,\\ Hungarian Academy of Sciences, Budapest, Hungary \\
\texttt{vizermate@gmail.com}
}
\end{center}

\begin{abstract}
We introduce and study a variant of Ramsey numbers for \emph{edge-ordered graphs}, that is, graphs with linearly ordered sets of edges.
The \emph{edge-ordered Ramsey number} $\overline{R}_e(\mathfrak{G})$ of an edge-ordered graph $\mathfrak{G}$ is the minimum positive integer $N$ such that there exists an edge-ordered complete graph $\mathfrak{K}_N$ on $N$ vertices such that every 2-coloring of the edges of $\mathfrak{K}_N$ contains a monochromatic copy of $\mathfrak{G}$ as an edge-ordered subgraph of $\mathfrak{K}_N$.

We prove that the edge-ordered Ramsey number $\overline{R}_e(\mathfrak{G})$ is finite for every edge-ordered graph $\mathfrak{G}$ and we obtain better estimates for special classes of edge-ordered graphs.
In particular, we prove $\overline{R}_e(\mathfrak{G}) \leq 2^{O(n^3\log{n})}$ for every bipartite edge-ordered graph $\mathfrak{G}$ on $n$ vertices.
We also introduce a natural class of edge-orderings, called \emph{lexicographic edge-orderings}, for which we can prove much better upper bounds on the corresponding edge-ordered Ramsey numbers.
\end{abstract}

\section{Introduction}

An \emph{edge-ordered graph} $\mathfrak{G} = (G,\prec)$ consists of a graph $G=(V,E)$ and a linear ordering $\prec$ of the set of edges~$E$.
We sometimes use the term \emph{edge-ordering of $G$} for the ordering $\prec$ and also for $\mathfrak{G}$.
An edge-ordered graph $(G,\prec_1)$ is an \emph{edge-ordered subgraph} of an edge-ordered graph $(H,\prec_2)$ if $G$ is a subgraph of $H$ and $\prec_1$ is a suborder of $\prec_2$.
We say that $(G,\prec_1)$ and $(H,\prec_2)$ are \emph{isomorphic} if there is a graph isomorphism between $G$ and $H$ that also preserves the edge-orderings $\prec_1$ and $\prec_2$.

For a positive integer $k$, a \emph{$k$-coloring} of the edges of a graph $G$ is any function that assigns one of the $k$ \emph{colors} to each edge of $G$.
The \emph{edge-ordered Ramsey number} $\overline{R}_e(\mathfrak{G})$ of an edge-ordered graph $\mathfrak{G}$ is the minimum positive integer $N$ such that there exists an edge-ordering $\mathfrak{K}_N$ of the complete graph $K_N$ on $N$ vertices such that every 2-coloring of the edges of $\mathfrak{K}_N$ contains a monochromatic copy of $\mathfrak{G}$ as an edge-ordered subgraph of $\mathfrak{K}_N$.

More generally, for two edge-ordered graphs $\mathfrak{G}$ and $\mathfrak{H}$, we use $\overline{R}_e(\mathfrak{G},\mathfrak{H})$ to denote the minimum positive integer $N$ such that there exists an edge-ordering $\mathfrak{K}_N$ of $K_N$ such that every 2-coloring of the edges of $\mathfrak{K}_N$ with colors red and blue contains a red copy of $\mathfrak{G}$ or a blue copy of $\mathfrak{H}$ as an edge-ordered subgraph of $\mathfrak{K}_N$.
We call the number $\overline{R}_e(\mathfrak{G},\mathfrak{H})$ the \emph{non-diagonal} edge-ordered Ramsey number.

To our knowledge, Ramsey numbers of edge-ordered graphs were not considered in the literature.
On the other hand, Ramsey numbers of graphs with ordered vertex sets have been quite extensively studied recently; for example, see~\cite{bckk15,bjv16,clfs17}. 
For questions concerning extremal problems about vertex-ordered graphs consult the recent surveys~\cite{t18,t19}.
A \emph{vertex-ordered graph} $\mathcal{G}=(G,\prec)$ (or simply an \emph{ordered graph}) is a graph $G$ with a fixed linear ordering $\prec$ of its vertices.
We use the term \emph{vertex-ordering of $G$} to denote the ordering $\prec$ as well as the ordered graph~$\mathcal{G}$.
An ordered graph $(G,\prec_1)$ is a \emph{vertex-ordered subgraph} of an ordered graph $(H,\prec_2)$ if $G$ is a subgraph of $H$ and $\prec_1$ is a suborder of $\prec_2$.
We say that $(G,\prec_1)$ and $(H,\prec_2)$ are \emph{isomorphic} if there is a graph isomorphism between $G$ and $H$ that also preserves the vertex-orderings $\prec_1$ and $\prec_2$.
Unlike in the case of edge-ordered graphs, there is a unique vertex-ordering $\mathcal{K}_N$ of $K_N$ up to isomorphism.
The \emph{ordered Ramsey number} $\overline{R}(\mathcal{G})$ of an ordered graph $\mathcal{G}$ is the minimum positive integer $N$ such that every 2-coloring of the edges of $\mathcal{K}_N$ contains a monochromatic copy of $\mathcal{G}$ as a vertex-ordered subgraph of $\mathcal{K}_N$.

For an $n$-vertex graph $G$, let $R(G)$ be the Ramsey number of $G$.
It is easy to see that $R(G) \leq \overline{R}(\mathcal{G})$ and $R(G) \leq  \overline{R}_e(\mathfrak{G})$ for each vertex-ordering $\mathcal{G}$ of $G$ and edge-ordering $\mathfrak{G}$ of~$G$.
We also have $\overline{R}(G) \leq  \overline{R}(\mathcal{K}_n) = R(K_n)$ and thus ordered Ramsey numbers are always finite.
Proving that $\overline{R}_e(\mathfrak{G})$ is always finite seems to be more challenging; see Theorem~\ref{thm-edgeOrderedRamseyFinite}.

The Tur\'an numbers of edge-ordered graphs were recently introduced in~\cite{gmnptv19}, motivated by a lemma in~\cite[Lemma 23]{gpv18}.
The authors of~\cite{gmnptv19} proved, for example, a variant of the Erd\H{o}s--Stone--Simonovits Theorem for edge-ordered graphs, and also investigated the Tur\'an numbers of small edge-ordered paths, star forests, and 4-cycles; see also the last section of~\cite{t19}.

Another related problem is to determine the maximum length of a monotone increasing path that must appear in any edge-ordered complete graph on $n$ vertices.
The Chv\'atal--Koml\'os conjecture~\cite{ck1971} says that this quantity is linear in $n$ and the authors of~\cite{bkpstw2018} could prove an almost linear lower bound.

For $n \in \mathbb{N}$, we use $[n]$ to denote the set $\{1,\dots,n\}$.
We omit floor and ceiling signs whenever they are not crucial.
All logarithms in this paper are base $2$.

\section{Our results}
\label{sec-ourResults}

We study the growth rate of edge-ordered Ramsey numbers with respect to the number of vertices for various classes of edge-ordered graphs.
As our first result, we show that edge-ordered Ramsey numbers are always finite and thus well-defined.

\begin{theorem}
\label{thm-edgeOrderedRamseyFinite}
For every edge-ordered graph $\mathfrak{G}$, the edge-ordered Ramsey number $\overline{R}_e(\mathfrak{G})$ is finite.
\end{theorem}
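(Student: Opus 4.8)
The plan is to first reduce to the case of edge-ordered \emph{complete} graphs. Since any edge-ordering of a graph $G$ on $n$ vertices extends to an edge-ordering of $K_n$ (place the missing edges arbitrarily, say above all existing ones), every $\mathfrak{G}$ is an edge-ordered subgraph of some edge-ordered complete graph $\mathfrak{K}_n$, and a monochromatic copy of $\mathfrak{K}_n$ contains a monochromatic copy of $\mathfrak{G}$. Hence $\overline{R}_e(\mathfrak{G}) \le \overline{R}_e(\mathfrak{K}_n)$, and it suffices to bound $\overline{R}_e(\mathfrak{K}_n)$ for an arbitrary edge-ordering $\mathfrak{K}_n$ of $K_n$.

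The difficulty, and the reason the vertex-ordered argument $\overline{R}(\mathcal{G}) \le R(K_n)$ has no direct analogue, is that a large monochromatic clique produced by the ordinary Ramsey theorem carries a \emph{predetermined} induced edge-ordering, which we cannot force to equal the target ordering of $\mathfrak{K}_n$. Two tempting shortcuts both fail. If we order the edges of $\mathfrak{K}_N$ \emph{canonically}, i.e.\ as a fixed function of a linear order of the vertices (for instance lexicographically by endpoints), then every increasing $n$-subset induces the \emph{same} order type, so such an $\mathfrak{K}_N$ contains only one edge-ordered $K_n$ and is useless for a general target. If instead we seek a \emph{universal} ordering in which every sufficiently large vertex subset already realises $\mathfrak{K}_n$, then a union bound over the (exponentially many in $N$) candidate monochromatic cliques overwhelms the per-clique concentration, so the straightforward probabilistic construction breaks down. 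Thus the edge-ordering must be chosen non-canonically and \emph{together with} the Ramsey argument, rather than before it.

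The approach I would take is the \emph{partite construction} (equivalently, structural Ramsey in the sense of Ne\v{s}et\v{r}il--R\"odl). Encode an edge-ordered complete graph as a finite relational structure carrying a linear order $<_V$ on its vertices together with the edge-ordering $<_E$ (a linear order on the pairs); the class $\mathcal{K}$ of all such finite structures is hereditary under taking induced substructures. The target $\mathfrak{K}_n$, equipped with any compatible vertex order, lies in $\mathcal{K}$, and finding a monochromatic copy of this enriched target certainly yields a monochromatic copy of $\mathfrak{K}_n$ as an edge-ordered subgraph. The key point is to show that $\mathcal{K}$ is a \emph{Ramsey class}: one builds, by iterated amalgamation of ``pictures'', a single large structure $\mathfrak{H} \in \mathcal{K}$ such that every $2$-colouring of its edges contains a monochromatic copy of the enriched $\mathfrak{K}_n$ (formally, $\mathfrak{H} \to (\mathfrak{K}_n)^{\mathfrak{K}_2}_2$). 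Keeping the edge-order of $\mathfrak{H}$ then gives the desired $\mathfrak{K}_N$, and hence $\overline{R}_e(\mathfrak{K}_n) < \infty$. An elementary variant replaces the abstract machinery by an induction on the number of edges of $\mathfrak{G}$, peeling off the $\prec$-largest edge and reinserting it as a new maximum across many copies; this yields the same conclusion with an explicit tower-type bound.

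The main obstacle is the combinatorial heart of the partite construction, namely the \emph{partite lemma}: the base step asserting that in a suitably multipartite edge-ordered structure any $2$-colouring admits a monochromatic \emph{transversal} copy realising the prescribed order, which one proves by a product Ramsey (Hales--Jewett / Graham--Rothschild) argument. Feeding this lemma into the amalgamation requires that $\mathcal{K}$ have the amalgamation and ordering properties; amalgamating the two linear orders $<_V$ and $<_E$ is routine, since linear orders amalgamate and the two orders live on different sorts (vertices and pairs), and adding $<_V$ is precisely what makes the structures rigid so that the construction can control one copy at a time. Verifying the partite lemma and keeping every amalgamation step compatible with both orders is where essentially all the work lies; once it is in place, finiteness follows, and the quantitative refinements for bipartite and lexicographic $\mathfrak{G}$ will require replacing this generic (enormous) bound by tailored constructions.
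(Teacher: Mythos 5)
Your overall route --- encoding an edge-ordered complete graph as a relational structure carrying two linear orders (on vertices and on pairs) and establishing the Ramsey property of this class via the partite construction --- is genuinely different from the paper's argument, and it is precisely the alternative the paper acknowledges: Theorem~\ref{thm-edgeOrderedRamseyFinite} also follows from the general structural-Ramsey theorem of Hubi\v{c}ka and Ne\v{s}et\v{r}il. Your reduction to edge-ordered complete graphs and your diagnosis of why canonical orderings and naive probabilistic constructions fail are both correct. The paper avoids all of the amalgamation machinery by a single, direct application of the Graham--Rothschild theorem: for $N \geq GR(1,k,3,n+m)$ it takes the vertex set of the host graph to be $2^{[N]}$, joins $X$ and $Y$ exactly when $X \cap Y \neq \emptyset$, orders vertices by $\min(X)$ and edges by $\min(X \cap Y)$; a $k$-coloring of the edges then induces a $k$-coloring of the $3$-parameter words over a one-letter alphabet (the two variable blocks and their ``intersection'' block encode an edge), and a monochromatic $(n+m)$-parameter word produced by Graham--Rothschild yields, via the sets $F_i$ built from its variable blocks, a monochromatic \emph{induced} copy of the target with both the vertex- and edge-order prescribed (Theorem~\ref{thm-finiteInduced}). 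This is shorter, self-contained modulo Graham--Rothschild, and gives an explicit (albeit enormous) bound.

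The gap in your proposal is that the entire difficulty is concentrated in the step you defer. Amalgamation of the two orders, which you verify, does not give the Ramsey property, and the classical Ne\v{s}et\v{r}il--R\"{o}dl partite construction does \emph{not} apply as stated to this class: the edge-order is a linear order on \emph{pairs}, i.e., a $4$-ary relation subject to a totality axiom, and classes defined by such ``must-hold'' conditions on higher-arity sets fall outside the classical framework, which handles a linear order on vertices plus forbidden irreducible substructures. Making the partite lemma and the picture amalgamations compatible with an order on pairs is exactly the content of the Hubi\v{c}ka--Ne\v{s}et\v{r}il theorem the paper calls a ``recent deep result''; asserting that it follows from ``a product Ramsey (Hales--Jewett / Graham--Rothschild) argument'' does not discharge it. Your proposed elementary variant is also not a proof as sketched: when you peel off the $\prec$-largest edge and try to reinsert it, that edge joins two specific vertices of the copy already embedded, and nothing in the sketch controls simultaneously its color and its position in the host's edge-ordering relative to the edges of that copy. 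So either you invoke Hubi\v{c}ka--Ne\v{s}et\v{r}il as a black box --- in which case the proof is complete but rests on far heavier machinery than the paper's --- or you must actually prove the partite lemma for edge-ordered structures, which, as you note yourself, is where essentially all the work lies.
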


Theorem~\ref{thm-edgeOrderedRamseyFinite} also follows from a recent deep result of Hubi\v{c}ka and Ne\v{s}et\v{r}il~\cite[Theorem~4.33]{hubNes16} about Ramsey numbers of general relational structures.
In comparison, our proof of Theorem~\ref{thm-edgeOrderedRamseyFinite} is less general, but it is much simpler and produces better and more explicit bound on $\overline{R}_e(\mathfrak{G})$.
It is a modification of the proof of Theorem~12.13~\cite[Page~138]{promel13}, which is based on the Graham--Rothschild Theorem~\cite{grahamRotschild71}.
In fact, the proof of Theorem~\ref{thm-edgeOrderedRamseyFinite} yields a stronger induced-type statement where additionally the ordering of the vertex set is fixed; see Theorem~\ref{thm-finiteInduced}.
Theorem~\ref{thm-edgeOrderedRamseyFinite} can also be extended to $k$-colorings with $k >2$.

Due to the use of the Graham--Rothschild Theorem, the bound on the edge-ordered Ramsey numbers obtained in the proof of Theorem~\ref{thm-edgeOrderedRamseyFinite} is still enormous.
It follows from a result of Shelah~\cite[Theorem~2.2]{shelah88} that this bound on $\overline{R}_e(\mathfrak{G})$ is primitive recursive, but it grows faster than, for example, a tower function of any fixed height.
Thus we aim to prove more reasonable estimates on edge-ordered Ramsey numbers, at least for some classes of edge-ordered graphs.

As our second main result, we show that one can obtain a much better upper bound on non-diagonal edge-ordered Ramsey numbers of two edge-ordered graphs, provided that one of them is bipartite.
For $d \in \mathbb{N}$, we say that a graph $G$ is \emph{$d$-degenerate} if every subgraph of $G$ has a vertex of degree at most $d$. 

\begin{theorem}
\label{thm-superexponentialBound}
Let $\mathfrak{H}$ be a $d$-degenerate edge-ordered graph on $n'$ vertices and let $\mathfrak{G}$ be a bipartite edge-ordered graph with $m$ edges and with both parts containing $n$ vertices.
If $d \leq n$ and $n' \leq t^{d+1}$ for $t=3n^{10}m!$, then
\[\overline{R}_e(\mathfrak{H},\mathfrak{G}) \leq (n')^2t^{d+1}.\]
\end{theorem}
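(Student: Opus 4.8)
The plan is to exhibit a single explicit edge-ordering $\mathfrak{K}_N$ of $K_N$ with $N=(n')^2 t^{d+1}$ that defeats every $2$-coloring, rather than reasoning about all edge-orderings simultaneously. I would build $\mathfrak{K}_N$ from a fixed vertex-ordering together with a lexicographic-type rule on edges, refined so that the relative order of the edges running between two well-separated blocks of vertices is as ``rich'' as possible; the factorial factor $m!$ inside $t=3n^{10}m!$ is present precisely to guarantee that, inside any sufficiently large blue complete bipartite pattern, one can realise \emph{every} prescribed ordering of the $m$ edges of $\mathfrak{G}$ as an induced suborder.

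First I would handle the red side. Fix a degeneracy ordering $u_1 \prec \dots \prec u_{n'}$ of $\mathfrak{H}$, so that each $u_i$ has at most $d$ neighbours among $u_1,\dots,u_{i-1}$, and attempt to embed $\mathfrak{H}$ greedily in red while maintaining, for the not-yet-embedded vertices, a common candidate set. Placing $u_i$ requires respecting its at most $d$ already-embedded back-neighbours together with the edge-order constraints they impose; splitting the current candidates according to the colours and relative positions of the edges to these back-neighbours costs, by design of $\mathfrak{K}_N$, a branching factor of at most $t$ at each of the at most $d+1$ relevant levels. The hypothesis $n'\le t^{d+1}$ is exactly what keeps the candidate set non-empty through all $n'$ rounds, so if this process never stalls we recover a red copy of $\mathfrak{H}$.

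Otherwise the greedy embedding stalls, and the plan is to read a dense blue structure off the failure. A stall means that a large set of candidates sends, in blue, all the required edges to a fixed small set of already-embedded vertices; iterating this pigeonhole over the at most $t^{d+1}$ possible colour-and-order signatures, together with the outer factor $(n')^2$, I would extract two disjoint blocks $X,Y$ with $|X|=|Y|=n$ all of whose crossing edges are blue, the assumption $d\le n$ ensuring these blue blocks can be taken as large as the parts of $\mathfrak{G}$. By the richness built into $\mathfrak{K}_N$ in the construction step, the edge-ordering induced on the blue $K_{n,n}$ between $X$ and $Y$ contains the prescribed ordering of the $m$ edges of $\mathfrak{G}$ as a suborder, yielding a blue copy of $\mathfrak{G}$.

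The hard part will be the construction itself, namely designing a \emph{single} edge-ordering that is simultaneously (a) flexible enough to support the greedy red embedding of an \emph{arbitrary} $d$-degenerate edge-ordered $\mathfrak{H}$ and (b) universal enough that every blue $K_{n,n}$ it produces contains the specific edge-ordered $\mathfrak{G}$. Reconciling these two universality demands while keeping $N$ at only $(n')^2 t^{d+1}$ is where the polynomial slack $n^{10}$ and the factorial $m!$ in $t$ must be spent carefully; once $\mathfrak{K}_N$ is fixed, the two embedding arguments above reduce to routine greedy and pigeonhole estimates.
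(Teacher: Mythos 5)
Your high-level architecture matches the paper's: partition $[N]$ into $n'$ blocks of size $t^{d+1}$ indexed by a degeneracy ordering of $\mathfrak{H}$, greedily embed $\mathfrak{H}$ one colour at a time, and if the embedding stalls extract a large monochromatic complete bipartite pattern between two blocks that is forced to contain $\mathfrak{G}$. But there are two genuine gaps. First, the step you yourself flag as ``the hard part'' --- constructing a single edge-ordering between two blocks such that every large monochromatic bipartite pattern contains $\mathfrak{G}$ --- is exactly the content you cannot defer, and a deterministic ``lexicographic-type rule refined to be rich'' will not deliver it. The paper resolves this probabilistically (its Lemma~\ref{lem-orderingSuperexponentialBound}): take a uniformly random edge-ordering of $K_{M,M}$, decompose each copy of $K_{t,t}$ into at least $(t/n)^2$ copies of $K_{n,n}$ pairwise sharing at most one edge, note each independently contains $\mathfrak{G}$ with probability at least $1/m!$, and apply a Chernoff bound plus a union bound over the $\binom{M}{t}^2$ copies of $K_{t,t}$. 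This is precisely where $t=3n^{10}m!$ and the hypothesis $n'\le t^{d+1}$ are spent (the latter controls $\log\binom{M}{t}$ in the union bound, not the survival of your candidate sets, which is guaranteed by $|I_i|=n't^{d+1}$ alone).

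Second, your extraction step pulls out only a monochromatic $K_{n,n}$, with parts the same size as the parts of $\mathfrak{G}$, and asserts that its induced edge-ordering contains $\mathfrak{G}$. No edge-ordering of $K_{M,M}$ can have this property for every copy of $K_{n,n}$ and an arbitrary $\mathfrak{G}$: if $\mathfrak{G}$ is itself an edge-ordered $K_{n,n}$, the claim would force all copies of $K_{n,n}$ in $K_{M,M}$ to be order-isomorphic to $\mathfrak{G}$, which (by the canonical-ordering phenomenon of Theorem~\ref{thm-canonical}) pins $\mathfrak{G}$ down to essentially the lexicographic ordering. You must extract a monochromatic $K_{t,t}$ with the much larger $t$, which is what the candidate-set sizes in the greedy argument are calibrated to produce. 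A smaller point: you try to enforce the edge-order of $\mathfrak{H}$ during the greedy embedding by branching on ``relative positions'' of edges to back-neighbours; the paper instead makes this automatic by taking the global ordering to be a blow-up of an ordering of $K_{n'}$ extending $\prec_{\mathfrak{H}}$ on the blocks, so the greedy step only ever needs to track colours.
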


In particular, if $\mathfrak{G}$ is a bipartite edge-ordered graph on $n$ vertices, then $\overline{R}_e(\mathfrak{G}) \leq 2^{O(n^3\log{n})}$.
We believe that the bound can be improved.
In fact, it is possible that $\overline{R}_e(\mathfrak{G})$ is at most exponential in the number of vertices of $\mathfrak{G}$ for every edge-ordered graph $\mathfrak{G}$; see Section~\ref{sec-openProblems} for more open problems.
We note that, for every graph $G$ and its vertex-ordering $\mathcal{G}$, both the standard Ramsey number $R(G)$ and the ordered Ramsey number $\overline{R}(\mathcal{G})$ grow at most exponentially in the number of vertices of $G$. 

In general, the difference between edge-ordered Ramsey numbers and ordered Ramsey numbers with the same underlying graph can be very large.
Let $M_n$ be a \emph{matching} on $n$ vertices, that is, a graph formed by a collection of $n/2$ disjoint edges.
There are ordered matchings $\mathcal{M}_n=(M_n,<)$  with super-polynomial ordered Ramsey numbers $\overline{R}(\mathcal{M}_n)$ in $n$~\cite{bckk15,clfs17}.
In fact this is true for almost all ordered matchings on $n$ vertices~\cite{clfs17}.
On the other hand, all edge-orderings of $M_n$ are isomorphic as edge-ordered graphs and thus $\overline{R}_e(\mathfrak{M}_n) = R(M_n) \leq O(n)$ for every edge-ordering $\mathfrak{M}_n$ of $M_n$.

In Section~\ref{sec-lexicographic}, we consider a special class of edge-orderings, which we call \emph{lexicographic edge-orderings}, for which we can prove much better upper bounds on their edge-ordered Ramsey numbers and which seem to be quite natural.

An ordering $\prec$ of edges of a graph $G=(V,E)$ is \emph{lexicographic} if there is a one-to-one correspondence $f \colon V \to \{1,\dots,|V|\}$ such that any two edges $\{u,v\}$ and $\{w,t\}$ of~$G$ with $f(u)<f(v)$ and $f(w)<f(t)$ satisfy $\{u,v\} \prec \{w,t\}$ if either $f(u) < f(w)$ or if $(f(u)=f(w) \;\&\; f(v) < f(t))$.
We say that such mapping $f$ is \emph{consistent} with $\prec$.
Note that, for every vertex $u$, the edges $\{u,v\}$ with $f(u) < f(v)$ form an interval in $\prec$.
Also observe that there is a unique (up to isomorphism) lexicographic edge-ordering $\mathfrak{K}^{lex}_n$ of $K_n$. Setting $\{u,v\} \prec' \{w,t\}$ if either $f(u) < f(w)$ or if $(f(u)=f(w) \;\&\; f(v) > f(t))$ we obtain the \emph{max-lexicographic} edge-ordering $\prec'$ of $G$.
Observe that in the max-lexicographic ordering, for every vertex $u$, the edges $\{u,v\}$ with $f(u) < f(v)$ again form an interval in $\prec'$.
When compared to the lexicographic edge-ordering, each of these intervals is reversed, but the ordering of the intervals is kept the same.

For a linear ordering $<$ on some set $X$, we use $<^{-1}$ to denote the \emph{inverse ordering} of $<$, that is, for all $x,y \in X$, we have $x <^{-1} y$ if and only if $y < x$.

The lexicographic and max-lexicographic edge-orderings are natural, as Ne\v{s}et\v{r}il and R\"{o}dl~\cite{nesetrilRodl17} showed that these orderings are canonical in the following sense. 

\begin{theorem}[\cite{nesetrilRodl17}]
\label{thm-canonical}
For every $n \in \mathbb{N}$, there is a positive integer $T(n)$ such that every edge-ordered complete graph on $T(n)$ vertices contains a copy of $K_n$ such that the edges of this copy induce one of the following four edge-orderings: lexicographic edge-ordering $\prec$, max-lexicographic edge-ordering $\prec'$, $\prec^{-1}$, or $(\prec')^{-1}$. 
\end{theorem}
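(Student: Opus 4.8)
The plan is to prove the statement by a canonical Ramsey argument: first use the ordinary (hypergraph) Ramsey theorem to pass to a large clique on which the edge-ordering is \emph{order-invariant} -- meaning that the relative order in $\prec$ of any two edges depends only on the relative order of their endpoints -- and then to classify, by a finite case analysis, all order-invariant edge-orderings, showing each is isomorphic to one of the four listed.

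First I would fix an arbitrary linear order $v_1 < \dots < v_N$ of the vertices of the given edge-ordered complete graph and identify each edge $\{v_i,v_j\}$ with the pair $(i,j)$, $i<j$. The comparison of two edges in $\prec$ involves only the vertices they span, that is, at most four vertices. I would therefore color every $4$-element subset $\{v_i,v_j,v_k,v_\ell\}$ with $i<j<k<\ell$ by the order type that $\prec$ induces on its six edges; there are $6!$ such order types. Applying the finite Ramsey theorem for $4$-uniform hypergraphs, and once more for $3$-uniform hypergraphs to stabilize the comparison of edges sharing a single vertex, yields a subset $W$ of size $n$ -- provided $N \ge T(n)$ is at least the corresponding Ramsey number -- on which the pattern induced on every $3$- and $4$-subset is constant. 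Consequently, for any two edges with endpoints in $W$, whether one precedes the other in $\prec$ depends only on the relative order of their (at most four) endpoints, so $\prec|_W$ is order-invariant.

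It then remains to classify order-invariant total orders. I would record the finitely many local decisions such an order makes: the direction in which it compares two edges sharing their smaller endpoint, two edges sharing their larger endpoint, two edges meeting in a single vertex (the larger endpoint of one being the smaller endpoint of the other), and the three ways of comparing two disjoint edges (nested, crossing, and separated). These decisions are heavily constrained, because $\prec|_W$ is a genuine linear order and hence transitive: applying transitivity to configurations on five and six vertices eliminates all combinations except those realized by the lexicographic and max-lexicographic orderings and their inverses. The main work, and the expected main obstacle, lies precisely here -- in the bookkeeping that turns transitivity into the required consistency and excludes the non-canonical local patterns. Already at the level of triples this is visible: the six possible $3$-patterns collapse, under reversal of the vertex order and of $\prec$, into just two classes, corresponding exactly to the lexicographic and the max-lexicographic types.

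Finally I would match the surviving orders to the statement. Reversing the vertex order $v_1 < \dots < v_N$ interchanges the roles of the smaller and larger endpoints, while passing from $\prec$ to $\prec^{-1}$ reverses the order; modulo these two symmetries the classification leaves precisely the lexicographic ordering $\prec$ and the max-lexicographic ordering $\prec'$. Choosing the correspondence $f$ to be the inherited order of $W$ or its reverse accordingly, the clique on $W$ induces one of $\prec$, $\prec'$, $\prec^{-1}$, or $(\prec')^{-1}$. Taking $T(n)$ equal to the Ramsey number used in the first step then completes the argument.
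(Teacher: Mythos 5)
First, note that the paper itself does not prove this statement: Theorem~\ref{thm-canonical} is quoted from Ne\v{s}et\v{r}il and R\"{o}dl (and is also an unpublished result of Leeb), and the only information the paper gives about its proof is the remark in Section~\ref{sec-openProblems} that it ``uses Ramsey's theorem for quadruples and $6!=720$ colors.'' Your opening step is exactly that reduction: color the $4$-subsets by the order type that $\prec$ induces on their six edges, apply the $4$-uniform (and, to stabilize triangles, the $3$-uniform) Ramsey theorem, and pass to a set $W$ on which $\prec$ is order-invariant. This part is correct and matches the known argument.

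The gap is the second half, which you yourself flag as ``the main work and the expected main obstacle'': the classification of order-invariant linear orderings of the edges of a large complete graph. This is not bookkeeping that can be deferred --- it is the entire mathematical content of the theorem beyond a routine Ramsey application --- and your sketch of it is too optimistic in one place. Your remark that the collapse is ``already visible at the level of triples'' is misleading: \emph{all} $3!=6$ possible patterns on a triangle $\{a,b,c\}$ with $a<b<c$ are realized by orderings isomorphic to one of the four canonical types (for instance $\{a,b\}\prec\{b,c\}\prec\{a,c\}$ is realized by ``larger endpoint ascending, then smaller endpoint descending,'' which becomes $(\prec')^{-1}$ after reversing the vertex order), so nothing whatsoever is excluded at the triple level; the six patterns do fall into the two orbits you describe, but the genuine exclusions begin only when the three disjoint-edge comparisons on four vertices are combined with the triangle pattern, and they are completed by transitivity on five- and six-vertex configurations. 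Until that finite but delicate case analysis is actually carried out and shown to leave precisely the orderings isomorphic to $\prec$, $\prec'$, $\prec^{-1}$, or $(\prec')^{-1}$, what you have is the correct strategy and a correct reduction, but not a proof.
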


Theorem~\ref{thm-canonical} is also an unpublished result of Leeb; see~\cite{nprv85}.
It is thus natural to consider the following variant of edge-ordered Ramsey numbers, which turns out to be more tractable than general edge-ordered Ramsey numbers.
The \emph{lexicographic edge-ordered Ramsey number} $\overline{R}_{lex}(\mathfrak{G})$ of a lexicographically edge-ordered graph $\mathfrak{G}$ is the minimum $N$ such that every 2-coloring of the edges of the lexicographically edge-ordered complete graph $\mathfrak{K}_N^{lex}$ on $N$ vertices contains a monochromatic copy of~$\mathfrak{G}$ as an edge-ordered subgraph of $\mathfrak{K}_N^{lex}$.
Observe that $\overline{R}_e(\mathfrak{G}) \leq \overline{R}_{lex}(\mathfrak{G})$ for every lexicographically edge-ordered graph $\mathfrak{G}$.

For every lexicographically edge-ordered graph $\mathfrak{G}=(G,\prec)$, the lexicographic edge-ordered Ramsey number $\overline{R}_{lex}(\mathfrak{G})$ can be estimated from above with the ordered Ramsey number of some vertex-ordering of $G$.
More specifically, we have the following result.

\begin{lemma}
\label{lem-lexicographicVertex}
Every lexicographically edge-ordered graph $\mathfrak{G}=(G,\prec)$ satisfies
\[\overline{R}_{lex}(\mathfrak{G}) \leq \min_f\overline{R}(\mathcal{G}_f),\]
where the minimum is taken over all one-to-one correspondences $f \colon V \to \{1,\dots,|V|\}$ that are consistent with the lexicographic edge-ordering $\mathfrak{G}$ and $\mathcal{G}_f$ is the vertex-ordering of $G$ determined by $f$.
\end{lemma}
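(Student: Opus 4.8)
The plan is to reduce the lexicographic edge-ordered Ramsey problem to the ordinary ordered (vertex-)Ramsey problem, exploiting the fact that a lexicographic edge-ordering is completely determined by a vertex-ordering. Fix a one-to-one correspondence $f\colon V \to \{1,\dots,|V|\}$ that is consistent with $\prec$, and set $N := \overline{R}(\mathcal{G}_f)$. The key observation is that the lexicographically edge-ordered complete graph $\mathfrak{K}_N^{lex}$ is precisely the edge-ordering induced by the canonical vertex-ordering $\mathcal{K}_N$ on $\{1,\dots,N\}$ (comparing edges by their smaller endpoint first and then by their larger endpoint); this is legitimate because there is a unique lexicographic edge-ordering of $K_N$ up to isomorphism. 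Consequently, a $2$-coloring of the edges of $\mathfrak{K}_N^{lex}$ is literally the same object as a $2$-coloring of the edges of $\mathcal{K}_N$.

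Given such a coloring, since $N = \overline{R}(\mathcal{G}_f)$ there is a monochromatic copy of $\mathcal{G}_f$ as a vertex-ordered subgraph of $\mathcal{K}_N$; that is, an injection $\phi \colon V \to \{1,\dots,N\}$ that is order-preserving (with respect to $f$ and the standard order) and that maps the edges of $G$ to edges of a single color. The heart of the argument is to check that $\phi$ automatically preserves the edge-ordering as well, so that this monochromatic vertex-ordered copy of $\mathcal{G}_f$ is simultaneously a monochromatic edge-ordered copy of $\mathfrak{G}$ in $\mathfrak{K}_N^{lex}$.

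To verify this, take two edges $\{u,v\}$ and $\{w,t\}$ of $G$ with $f(u)<f(v)$ and $f(w)<f(t)$. Since $\phi$ is order-preserving and injective, we have $\phi(u)<\phi(v)$ and $\phi(w)<\phi(t)$, so $\phi(u)$ and $\phi(w)$ are the smaller endpoints of the respective image edges; moreover $f(u)<f(w) \iff \phi(u)<\phi(w)$, $u=w \iff \phi(u)=\phi(w)$, and $f(v)<f(t) \iff \phi(v)<\phi(t)$. Substituting these equivalences into the defining condition of the lexicographic edge-ordering, and using that both $f$ and $\phi$ are injective (so that the tie-breaking case $f(u)=f(w)$ forces $u=w$), one sees that $\{u,v\}\prec\{w,t\}$ holds if and only if the image edges are ordered the same way in $\mathfrak{K}_N^{lex}$. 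Hence $\phi$ is an edge-order isomorphism onto its image, which gives $\overline{R}_{lex}(\mathfrak{G}) \leq N = \overline{R}(\mathcal{G}_f)$. Letting $f$ range over all consistent correspondences yields the claimed bound $\overline{R}_{lex}(\mathfrak{G}) \leq \min_f \overline{R}(\mathcal{G}_f)$.

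The step I expect to require the most care is precisely this last compatibility check: one must confirm that the comparison in the lexicographic edge-ordering only ever pits smaller endpoints against smaller endpoints and larger against larger, and that order-preservation of $\phi$ on vertices translates cleanly into order-preservation on edges. Once this is pinned down, the reduction is immediate, and no Ramsey-theoretic work remains beyond invoking the definition of $\overline{R}(\mathcal{G}_f)$.
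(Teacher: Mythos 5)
Your proposal is correct and follows essentially the same route as the paper's proof: identify the coloring of $\mathfrak{K}_N^{lex}$ with a coloring of $\mathcal{K}_N$, extract a monochromatic vertex-ordered copy of $\mathcal{G}_f$, and verify that the order-preserving embedding automatically respects the lexicographic edge-ordering. The compatibility check you flag as the delicate step is exactly the one the paper carries out, via the same chain of equivalences between $f$-comparisons and comparisons of the image vertices.
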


We prove Lemma~\ref{lem-lexicographicVertex} in Section~\ref{sec-lexicographic}.
Since $\overline{R}(\mathcal{K}_n) = R(K_n)$, it follows from Lemma~\ref{lem-lexicographicVertex} and from the well-known bound $R(K_n) \leq 2^{2n}$ by Erd\H{o}s and Szekeres~\cite{erdosSzekeres35} that the numbers $\overline{R}_{lex}(\mathfrak{G})$ are always at most exponential in the number of vertices of $G$.
In fact, we have $\overline{R}_{lex}(\mathfrak{K}^{lex}_n) = \overline{R}(\mathcal{K}_n)  = R(K_n)$ for every $n$.
The equality is achieved in the statement of Lemma~\ref{lem-lexicographicVertex}, for example, for graphs with a unique vertex-ordering determined by the lexicographic edge-ordering.
Such graphs include graphs where each edge is contained in a triangle. 
Additionally, combining Lemma~\ref{lem-lexicographicVertex} with a result of Conlon et al.~\cite[Theorem~3.6]{clfs17} gives the estimate
\[\overline{R}_{lex}(\mathfrak{G}) \leq 2^{O(d\log^2{(2n/d)})}\]
for every $d$-degenerate lexicographically edge-ordered graph $\mathfrak{G}$ on $n$ vertices.
In particular, $\overline{R}_{lex}(\mathfrak{G})$ is at most quasi-polynomial in $n$ if $d$ is fixed.

We note that the bound in Lemma~\ref{lem-lexicographicVertex} is not always tight.
For example, $R(K_{1,n}) = \overline{R}_{lex}(\mathfrak{K}_{1,n})$ for every edge-ordering $\mathfrak{K}_{1,n}$ of $K_{1,n}$, as any two edge-ordered stars $K_{1,n}$ are isomorphic as edge-ordered graphs.
However, the Ramsey number $R(K_{1,n})$ is known to be strictly smaller than $\overline{R}(\mathcal{K}_{1,n})$ for $n$ even and for any vertex-ordering $\mathcal{K}_{1,n}$ of $K_{1,n}$; see~\cite{burrRoberts73} and~\cite[Observation~11 and Theorem~12]{bckk13}.

As an application of Lemma~\ref{lem-lexicographicVertex} we obtain asymptotically tight estimate on the following lexicographic edge-ordered Ramsey numbers of paths.
The \emph{edge-monotone path} $\mathfrak{P}_n=(P_n,\prec)$ is the edge-ordered path on $n$ vertices $v_1,\dots,v_n$, where $\{v_1,v_2\} \prec \dots \prec \{v_{n-1},v_n\}$; see part~(a) of Figure~\ref{fig-monotone}.

\begin{figure}
\begin{center}
\includegraphics{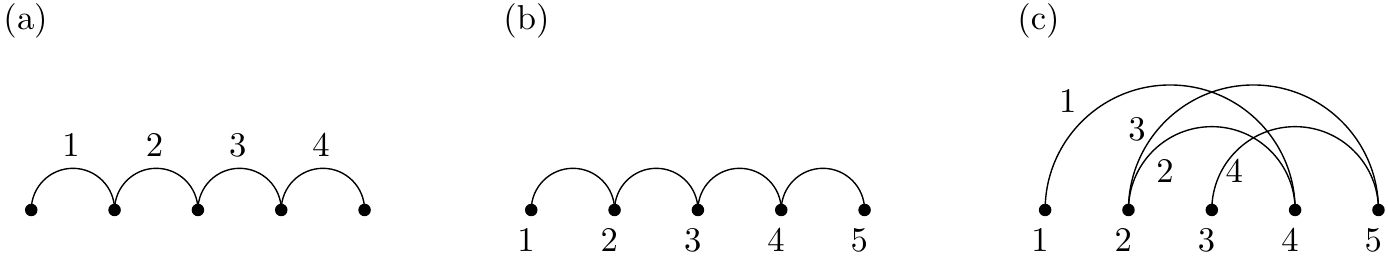}
\caption{(a) The edge-monotone path on $5$ vertices. 
(b) The monotone path on $5$ vertices. 
(c) A different ordered path on $5$ vertices and the corresponding lexicographic edge-ordering.
The label of each edge and vertex denotes the position in the edge- and vertex-ordering, respectively.}
\label{fig-monotone}
\end{center}
\end{figure}

\begin{proposition}
\label{prop-lexicographicPath}
For every integer $n >2$, we have \[\overline{R}_{lex}(\mathfrak{P}_n) \leq 2n-3+\sqrt{2n^2-8n+11}.\]
\end{proposition}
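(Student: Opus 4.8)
The plan is to apply Lemma~\ref{lem-lexicographicVertex}: it suffices to exhibit one bijection $f\colon V(P_n)\to[n]$ that is consistent with the edge-monotone ordering $\mathfrak{P}_n$ and whose induced vertex-ordering $\mathcal{G}_f$ satisfies $\overline{R}(\mathcal{G}_f)\le 2n-3+\sqrt{2n^2-8n+11}$. First I would record which orderings are consistent. Writing the path as $v_1-\cdots-v_n$ with $\{v_1,v_2\}\prec\cdots\prec\{v_{n-1},v_n\}$, consistency means exactly that the edge-keys $(\min f,\max f)$ increase lexicographically along the path; in particular the vertex labeled $1$ must be $v_1$ or $v_2$, and iterating gives a recursive description of all consistent $f$. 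The monotone labeling $f(v_i)=i$ is consistent but gives the quadratic $\overline{R}(\mathcal{P}_n)=(n-1)^2+1$, so a better choice is needed. I would instead single out the explicit \emph{crossing} ordering: split $[n]$ into lows $\{1,\dots,\lceil n/2\rceil\}$ and highs $\{\lceil n/2\rceil+1,\dots,n\}$ and let the path traverse them in a staircase, joining each low to the two consecutive highs straddling it. A direct check that its edge-keys read $(1,\cdot),(1,\cdot),(2,\cdot),(2,\cdot),\dots$ in increasing lexicographic order confirms this $f$ is consistent.

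With this choice, $\mathcal{G}_f$ is the ordered path all of whose edges run between an initial block of low vertices and a terminal block of high vertices, forming a monotone staircase. Hence a monochromatic vertex-ordered copy of $\mathcal{G}_f$ in a $2$-colored $\mathcal{K}_N$ is precisely a monochromatic staircase: increasing positions $x_1<\cdots<x_m$ (lows) and $y_1<\cdots<y_m$ (highs) with $x_m<y_1$ such that all edges $\{x_i,y_i\}$ and $\{x_i,y_{i+1}\}$ receive a single color. I would bound $\overline{R}(\mathcal{G}_f)$ by an Erd\H{o}s--Szekeres-type argument tailored to such staircases: scanning the high vertices from left to right and maintaining, for each color and each attained number of used lows, the smallest feasible current low (a two-layer patience-sorting), one shows that a $2$-coloring of $\mathcal{K}_N$ with no monochromatic staircase of the required length can carry only boundedly many vertices, giving the desired upper bound on $N$.

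The crux is converting this scheme into the exact estimate. A crude analysis assigns each vertex the pair (longest red partial staircase, longest blue partial staircase), declares these pairs distinct, and yields only the quadratic $N\le(m-1)^2$; this is wasteful because $\mathcal{G}_f$ is far from a monotone path (its longest monotone run has length two), so monochromatic staircases are comparatively easy to grow. The improvement to a \emph{linear} bound should come from charging the growth of a staircase directly against the range $[1,N]$, the separation of lows from highs forcing a trade-off between how many lows and how many highs a single color can exploit. Making this trade-off precise should produce the quadratic relation $N^2-(4n-6)N+2n^2-4n-2\le 0$, whose larger root equals $2n-3+\sqrt{2n^2-8n+11}$; together with Lemma~\ref{lem-lexicographicVertex} this gives the claim. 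I expect the main obstacle to be pinning down the admissible region of states exactly—equivalently, matching the count with an extremal coloring that witnesses the optimal leading constant $2+\sqrt{2}$—together with the parity of $n$ and the boundary bookkeeping for the first and last lows and highs.
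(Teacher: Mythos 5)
Your first step is exactly the paper's: you pick the same ``crossing'' (staircase) bijection $f$, under which $P_n$ becomes the ordered path with all edges running between an initial block of $\lceil n/2\rceil$ lows and a terminal block of $\lfloor n/2\rfloor$ highs, and you correctly observe that this $f$ is consistent with the edge-monotone ordering while the naive monotone labeling is not. That is the right reduction, and it matches the proof in the paper (which phrases it via a $0$--$1$ matrix $M$ with $1$-entries at positions $(i,i)$ and $(i+1,i)$ rather than via an explicit appeal to Lemma~\ref{lem-lexicographicVertex}, but the content is identical).

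The second half, however, is a genuine gap rather than a proof. Everything after ``I would bound $\overline{R}(\mathcal{G}_f)$ by an Erd\H{o}s--Szekeres-type argument'' is a description of a hoped-for scheme, and you concede yourself that you cannot ``pin down the admissible region of states'' or derive the quadratic inequality; you only reverse-engineer which quadratic would have to come out. The missing ingredient is not a patience-sorting/longest-increasing-subsequence argument at all but an extremal (Tur\'an-type) one for forbidden $0$--$1$ matrix patterns: take the majority color, say red, on the bipartite graph between $\bigl[\lceil N/2\rceil\bigr]$ and the remaining vertices, so that its $0$--$1$ incidence matrix $A$ has at least $\tfrac12\lceil N/2\rceil\lfloor N/2\rfloor\ge (N^2-1)/8$ ones; since $A$ cannot contain the staircase pattern $M$, the F\"uredi--Hajnal bound for \emph{minimalist} matrices (\cite{fur92}, see also \cite{bckk13}) caps the number of ones in $A$ by a quantity linear in $N$ with the precise constant, namely $\bigl(\lfloor n/2\rfloor-1\bigr)\lceil N/2\rceil+\bigl(\lceil n/2\rceil-1\bigr)\lfloor N/2\rfloor-\bigl(\lceil n/2\rceil-1\bigr)\bigl(\lfloor n/2\rfloor-1\bigr)$. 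Comparing the two bounds yields the quadratic in $N$ and hence the stated root. Without this (or an equivalent counting argument giving the same linear extremal function), your proposal establishes only the choice of ordering, not the bound $2n-3+\sqrt{2n^2-8n+11}$; and the two-layer scanning heuristic you sketch has no evident route to the correct leading constant, since it tracks monochromatic partial staircases per vertex rather than counting edges of the majority color against an extremal function.
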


The proof of Proposition~\ref{prop-lexicographicPath} uses the fact that the one-to-one correspondence $f$ consistent with the lexicographic edge-ordering of $P_n$ is not determined uniquely.
Indeed, we can choose the mapping $f$ so that it determines the vertex-ordering $\mathcal{P}_n$ of $P_n$ where edges are between consecutive pairs of vertices.
Such vertex-ordering $\mathcal{P}_n$ is called \emph{monotone path}; see part~(b) of Figure~\ref{fig-monotone}. 
However, it is known that $\overline{R}(\mathcal{P}_n) = (n-1)^2+1$~\cite{choPonn02} and thus we cannot apply Lemma~\ref{lem-lexicographicVertex} to this ordering to obtain a linear bound on $\overline{R}_{lex}(\mathfrak{P}_n)$. 
Instead we choose a different mapping $f$ that determines a vertex-ordering of $P_n$ with linear ordered Ramsey number; see part~(c) of Figure~\ref{fig-monotone}.

As our last result, we show an upper bound on edge-ordered Ramsey numbers of two graphs, where one of them is bipartite and suitably lexicographically edge-ordered.
This result uses a stronger assumption about $\mathfrak{G}$ than Theorem~\ref{thm-superexponentialBound}, but gives much better estimate.
For $m,n \in \mathbb{N}$, let $\mathfrak{K}^{lex}_{m,n}$ be the lexicographic edge-ordering of $K_{m,n}$ that induces a vertex-ordering, in which both parts of $K_{m,n}$ form an interval. 

\begin{theorem}
\label{thm-polynomialBound}
Let $\mathfrak{H}$ be a $d$-degenerate edge-ordered graph on $n'$ vertices and let $\mathfrak{G}$ be an edge-ordered subgraph of $\mathfrak{K}^{lex}_{n,n}$. 
Then
\[\overline{R}_e(\mathfrak{H},\mathfrak{G}) \leq (n')^2n^{d+1}.\]
\end{theorem}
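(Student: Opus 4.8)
The plan is to mirror the degeneracy-based embedding behind Theorem~\ref{thm-superexponentialBound}, exploiting the rigid lexicographic structure of $\mathfrak{G}$ to replace the factor $t=3n^{10}m!$ there by $n$. Fix a degeneracy ordering $h_1,\dots,h_{n'}$ of $\mathfrak{H}$ in which each $h_i$ has at most $d$ neighbours among $h_1,\dots,h_{i-1}$ (its \emph{back-neighbours}). I would build the host $\mathfrak{K}_N$ with $N=(n')^2n^{d+1}$ as follows: split the $N$ vertices into $n'$ consecutive intervals $V_1\prec\cdots\prec V_{n'}$ of size $s=n'n^{d+1}$ each, where $V_i$ is reserved for the image of $h_i$. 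For the edge-ordering, group the edges into \emph{bundles}, the bundle of an edge between $V_a$ and $V_b$ with $a<b$ being the pair $(a,b)$; order the bundles corresponding to edges $\{h_a,h_b\}$ of $\mathfrak{H}$ according to the edge-ordering of $\mathfrak{H}$ (placing the remaining bundles afterwards), and order the edges inside every bundle \emph{lexicographically} with respect to the vertex order. This single construction is the heart of the argument: because each edge of $\mathfrak{H}$ occupies its own bundle, any red copy that sends $h_i$ into $V_i$ automatically inherits the edge-ordering of $\mathfrak{H}$; and because each bundle is internally lexicographic and joins two intervals with $V_a$ entirely before $V_b$, any blue biclique contained in a single bundle is a copy of $\mathfrak{K}^{lex}_{n,n}$ and hence contains a blue copy of $\mathfrak{G}$.

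With the host fixed, I would run the greedy embedding with shrinking candidate sets. Processing $h_1,\dots,h_{n'}$ in order, I maintain for every not-yet-embedded $h_k$ a candidate set $C_k\subseteq V_k$ of hosts still compatible (in red) with the already placed back-neighbours of $h_k$; embedding $h_i$ at some $x_i\in C_i$ and then restricting $C_k$ to the red-neighbourhood of $x_i$ for every future neighbour $h_k$ of $h_i$. Since each $h_k$ has at most $d$ back-neighbours, its candidate set is restricted at most $d$ times. The key dichotomy at each step is: either there is $x_i\in C_i$ whose red-neighbourhood keeps at least a $1/n$ fraction of each relevant $C_k$, in which case I embed $h_i$ and continue; or no such $x_i$ exists.

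In the second case I extract the blue target. Failure means every $w\in C_i$ has blue-degree exceeding $(1-1/n)|C_k|$ into $C_k$ for some future neighbour $h_k$; pigeonholing over the at most $d$ future neighbours yields a single index $k^*>i$ and a set $A\subseteq C_i\subseteq V_i$ of at least $|C_i|/d$ vertices, each blue-dense into $C_{k^*}\subseteq V_{k^*}$. Since $i<k^*$, all these blue edges lie in the single bundle $(i,k^*)$; building the biclique greedily from the $C_{k^*}$ side, an averaging argument produces at each of $n$ steps a vertex of blue-degree at least $(1-1/n)$ times the current size, so a constant fraction of $A$ survives and, as long as $|A|=\Omega(n)$, a blue copy of $\mathfrak{K}^{lex}_{n,n}$ on increasing vertices emerges, giving a blue $\mathfrak{G}$. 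The quantitative bookkeeping fixes the exponent: keeping a $1/n$ fraction at each of the at most $d$ restrictions forces $|C_i|\ge s/n^{d}$, the extraction needs $|C_i|=\Omega(dn)$ to leave $n$ vertices on each side, so $s=\Omega(dn^{d+1})$ suffices, and since $d<n'$ the choice $s=n'n^{d+1}$ works with room to spare, yielding $N=(n')^2n^{d+1}$.

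The conceptual obstacle is exactly the design of the host edge-ordering: a purely lexicographic host cannot realise the arbitrary edge-ordering of $\mathfrak{H}$ in red, since most edge-orderings of $K_{n'}$ are not lexicographic, whereas an arbitrary host need not induce the lexicographic order demanded by a blue $\mathfrak{G}$. The bundle construction reconciles the two by being globally ordered according to $\mathfrak{H}$ yet locally lexicographic. The technical obstacle is the quantitative extraction step, where I must guarantee that a failed embedding produces a blue $K_{n,n}$ with $n$ vertices on \emph{each} side inside one bundle; it is this threshold, rather than the $d$ common-neighbourhood restrictions alone, that is responsible for the extra factor of $n$ in $n^{d+1}$.
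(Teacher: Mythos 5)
Your proposal is essentially the paper's own proof: the same partition into $n'$ intervals of size $n'n^{d+1}$, the same edge-ordering (a blow-up of an extension of $\mathfrak{H}$'s edge-ordering across intervals, lexicographic within each pair of intervals), and the same greedy degenerate embedding whose failure yields a monochromatic $K_{n,n}$ inside one lexicographically ordered bundle; the paper packages the embedding step as a separate lemma (Lemma~\ref{lem-greedyEmbedding}) applied with $t=n$, but the content is identical. Two small quantitative slips in your bookkeeping are worth flagging. First, the pigeonholing at a failed step is over the \emph{future} neighbours of $h_i$, of which there may be up to $n'-1$ (degeneracy only bounds the number of \emph{back}-neighbours by $d$), so the surviving set has size about $|C_i|/(n'-1)$ rather than $|C_i|/d$; this is exactly why the interval size must carry the factor $n'$, and it leaves no ``room to spare.'' Second, your extraction of the blue $K_{n,n}$ by $n$ rounds of $(1-1/n)$-shrinkage loses a factor of roughly $e$, which the bound $(n')^2n^{d+1}$ cannot absorb; the paper instead takes $n$ vertices of low red-degree and deletes all their red neighbours from the opposite candidate set at once (keeping $|C_j|$ a multiple of $n$ so the rounding is clean), which leaves exactly $n$ vertices and closes the argument with the stated constants.
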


The proof of Theorem~\ref{thm-edgeOrderedRamseyFinite} is presented in Section~\ref{sec-edgeOrderedRamseyFinite}.
Both Theorem~\ref{thm-superexponentialBound} and Theorem~\ref{thm-polynomialBound} are proved in Section~\ref{sec-betterBounds}.
Section~\ref{sec-lexicographic} contains the proofs of Lemma~\ref{lem-lexicographicVertex} and Proposition~\ref{prop-lexicographicPath}.
Finally, we mention some open problems in Section~\ref{sec-openProblems}.

\section{Proof of Theorem~\ref{thm-edgeOrderedRamseyFinite}}
\label{sec-edgeOrderedRamseyFinite}

In this section, we prove Theorem~\ref{thm-edgeOrderedRamseyFinite} by showing that edge-ordered Ramsey numbers are always finite.
The proof is carried out using the Graham--Rothschild Theorem~\cite{grahamRotschild71}.
To state this result, we need to introduce some definitions first.
We follow the notation from~\cite{promel13}.

Let $N$ and $t$ be nonnegative integers with $t \leq N$ and let $A$ be a finite set of symbols not containing the symbols $\lambda_1,\dots,\lambda_t$.
Then the set $[A]\binom{N}{t}$ of \emph{$t$-parameter words of length $N$ over $A$} is the set of mappings $f \colon [N] \to A \cup \{\lambda_1,\dots,\lambda_t\}$ such that for every $j$ with $1 \leq j \leq t$ there exists $i \in [N]$ such that $f(i)=\lambda_j$ and $\min(f^{-1}(\lambda_i)) < \min(f^{-1}(\lambda_j))$ for all $i$ and $j$ with $1 \leq i < j \leq t$.
The \emph{composition} $f \cdot g \in [A]\binom{N}{r}$ of $f \in [A]\binom{N}{t}$ and $g \in [A]\binom{t}{r}$ is defined by
\[(f \cdot g)(i) = \left.
\begin{cases}
f(i), & \text{ if } f(i) \in A \text{ and} \\
g(j), & \text{ if } f(i) = \lambda_j.
\end{cases}
\right.
\]

The following result, called the Graham--Rothschild Theorem, is a strengthening of the famous Hales--Jewett Theorem.

\begin{theorem}[The Graham--Rothschild Theorem~\cite{grahamRotschild71,promel13}]
\label{thm-grahamRotschild}
Let $A$ be a finite alphabet and let $r,t \in \mathbb{N}_0$ and $k \in \mathbb{N}$ be integers such that $r \leq t$.
Then there is a positive integer $N=GR(|A|,k,r,t)$ such that for every $k$-coloring $\chi'$ of $[A]\binom{N}{r}$ there exists a \emph{monochromatic} $f \in [A]\binom{N}{t}$, that is, $f$ satisfies
\[\chi'(f \cdot g) = \chi'(f \cdot h)\]
for all $g,h \in [A]\binom{t}{r}$.
\end{theorem}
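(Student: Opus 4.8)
The plan is to deduce the statement from the Hales–Jewett Theorem, which is precisely the special case $r=0$, $t=1$: a $1$-parameter word $f \in [A]\binom{N}{1}$ composed with the elements of $[A]\binom{1}{0}=A$ yields exactly the combinatorial line it determines, so a monochromatic $f$ is a monochromatic line, and Hales–Jewett guarantees one once $N$ is large. I would therefore first secure Hales–Jewett itself. The cleanest route is Shelah's proof by induction on the alphabet size $|A|$: one fixes a long block structure and, using a colour-focusing/pigeonhole argument across coordinates, reduces the number of ``free'' letters one at a time. This is the deep core of the whole theorem, and it is exactly this proof that yields the primitive recursive bound invoked later in the paper via Shelah~\cite{shelah88}.

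Next I would lift Hales–Jewett from lines to $t$-dimensional combinatorial subspaces, that is, the case $r=0$ with arbitrary $t$, by induction on $t$. Here a monochromatic $f \in [A]\binom{N}{t}$ is a monochromatic $t$-parameter subspace, since $[A]\binom{t}{0}=A^t$ and the words $f\cdot g$ range over all its points. The step from $t$ to $t+1$ iterates Hales–Jewett in a product alphabet: one applies the $t$-dimensional statement to obtain a large structure that is constant on patterns, and then adjoins one more line-direction, absorbing the new coordinate block via a single further application of the one-dimensional theorem. The bounds multiply out to a Hales–Jewett number of the enlarged parameters, so finiteness is preserved at each stage.

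The remaining and main work is the induction on $r$: assuming $GR(|A'|,k,r,t')$ exists for every alphabet $A'$, every $k$, and every $t'$, I would produce $GR(|A|,k,r+1,t)$. The device is to freeze the last parameter: an $(r+1)$-parameter word over $A$ becomes, after regarding its final parameter symbol $\lambda_{r+1}$ as an ordinary letter $z$, an $r$-parameter word over the enlarged alphabet $A\cup\{z\}$ (subject to the defining condition that the leftmost occurrences of the genuine parameters increase and that $z$ actually appears). Applying the inductive hypothesis over $A\cup\{z\}$ yields a large $r$-parameter word that is monochromatic for the induced colouring; reintroducing $z$ as a genuine $(r{+}1)$-st direction and equalising colours along that single restored direction by one more Hales–Jewett application then gives the desired $f \in [A]\binom{N}{t}$ whose every $(r+1)$-subword $f\cdot g$ carries a common colour.

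The hard part will be the $r\to r+1$ step, specifically the bookkeeping forced by the asymmetric defining condition on parameter words, namely that for $1\le i<j\le t$ the first occurrence of $\lambda_i$ precedes that of $\lambda_j$. One must check that freezing and then reintroducing a parameter keeps all intermediate objects legitimate parameter words, that the induced colouring of the smaller structure over $A\cup\{z\}$ is well defined, and that the final composition sweeps out exactly the family $\{f\cdot g : g\in[A]\binom{t}{r+1}\}$ with a single colour. Tracking these constraints while combining the inductive hypothesis with the subspace (Hales–Jewett) input is where essentially all of the difficulty and all of the growth of the bound $GR(|A|,k,r,t)$ reside; Hales–Jewett supplies the only genuinely non-inductive ingredient.
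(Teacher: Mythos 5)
You should first note that the paper contains no proof of this statement to compare against: Theorem~\ref{thm-grahamRotschild} is imported as a black box from Graham and Rothschild~\cite{grahamRotschild71} and Pr\"omel's book~\cite{promel13}, and is only \emph{applied} (to prove Theorem~\ref{thm-finiteInduced}). So your proposal is an attempt to reprove the cited theorem itself and must stand on its own. Its first two stages are fine: identifying the case $r=0$, $t=1$ with the Hales--Jewett theorem (via $[A]\binom{1}{0}=A$, so a monochromatic $f\in[A]\binom{N}{1}$ is a monochromatic combinatorial line) is correct, as is the lift to the case $r=0$ with arbitrary $t$ by induction on $t$.

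The genuine gap is the inductive step $r\to r+1$, precisely where you concede ``all of the difficulty'' lies, and the freezing device you propose fails at a concrete point. Freezing $\lambda_{r+1}$ into a new letter $z$ embeds $[A]\binom{N}{r+1}$ injectively into $[A\cup\{z\}]\binom{N}{r}$ (its image being the words that contain $z$ with the first $z$ occurring after the first occurrence of $\lambda_r$); you extend the induced colouring arbitrarily and apply the inductive hypothesis over $A\cup\{z\}$, obtaining a monochromatic $f'\in[A\cup\{z\}]\binom{N}{t}$. The trouble is that $f'$ may carry the letter $z$ at letter positions. Composition substitutes only into parameter positions, so every word $f'\cdot s$ inherits those $z$'s; consequently no composition of $f'$ lies in $[A]\binom{N}{t}$, and nothing in the inductive hypothesis lets you demand that $f'$ be $z$-free. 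This is the whole ballgame: if $f'$ \emph{were} $z$-free, the rest of your plan would go through without any further Hales--Jewett input, since for $h\in[A]\binom{t}{r+1}$ one checks that freezing commutes with composition, i.e.\ the freeze of $f'\cdot h$ equals $f'\cdot\bar h$ with $\bar h\in[A\cup\{z\}]\binom{t}{r}$, and monochromaticity of $f'$ already covers these compositions. Your proposed fix --- ``one more Hales--Jewett application to equalise colours along the restored direction'' --- does not address the obstruction, because Hales--Jewett colours points of a cube, not the $r$-parameter subwords whose colours must be controlled, and no choice of the arbitrary extension of the colouring can force $z$-freeness (if the original colouring is constant, every $f'$, however contaminated by $z$, is monochromatic). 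This is exactly why the known proofs --- Graham and Rothschild's original double induction, and the amalgamation-style proof reproduced in~\cite{promel13} --- iterate the inductive hypothesis many times inside a colour-focusing argument instead of invoking it once over an enlarged alphabet; that step is not bookkeeping but the core of the theorem, the source of its enormous bounds, and the reason that obtaining even primitive recursive bounds required Shelah's later work~\cite{shelah88}.
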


Applying the Graham--Rothschild Theorem similarly as in~\cite[Page~138]{promel13}, we can derive the following result, which is actually a stronger statement than Theorem~\ref{thm-edgeOrderedRamseyFinite}.

\begin{theorem}
\label{thm-finiteInduced}
Let $(F,\prec_v,\prec_e)$ be a graph with linear orderings $\prec_v$ and $\prec_e$ on its vertices and edges, respectively.
Then, for every $k \in \mathbb{N}$, there exists a graph $G$ with orderings $\lessdot_v$ and $\lessdot_e$ of its vertices and edges, respectively, such that for every $k$-coloring of the edges of~$G$ there is a monochromatic induced copy of $(F,\prec_v,\prec_e)$ in $(G,\lessdot_v,\lessdot_e)$.
\end{theorem}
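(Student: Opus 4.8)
The plan is to build the ordered graph $(G,\lessdot_v,\lessdot_e)$ explicitly as a space of parameter words over a suitable finite alphabet $A$ and then to read off the required monochromatic induced copy of $(F,\prec_v,\prec_e)$ from a monochromatic parameter word supplied by the Graham--Rothschild Theorem (Theorem~\ref{thm-grahamRotschild}); this mirrors the derivation of Theorem~12.13 in~\cite{promel13}. Write $V(F)=\{1,\dots,p\}$ with $1\prec_v\cdots\prec_v p$ and $E(F)=\{e_1\prec_e\cdots\prec_e e_m\}$. I would take the vertex set of $G$ to be the set $[A]\binom{N}{1}$ of $1$-parameter words of length $N$, for $N$ large, and I would identify the edges of $G$ with a designated subset of the $2$-parameter words $[A]\binom{N}{2}$: a $2$-parameter word $u$ is an edge joining the two $1$-parameter words obtained by specializing each of its parameters, and we keep $u$ as an edge exactly when this endpoint pattern is an edge of $F$ under the encoding. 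The key design goal is that every $p$-parameter word $f\in[A]\binom{N}{p}$ should spawn, through canonical vertex-selectors $h_1,\dots,h_p\in[A]\binom{p}{1}$ and edge-selectors $g_{ij}\in[A]\binom{p}{2}$, an induced copy of $F$: the vertices $f\cdot h_1,\dots,f\cdot h_p$ together with the edges $f\cdot g_{ij}$ for $\{i,j\}\in E(F)$.

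The orders $\lessdot_v$ and $\lessdot_e$ are then defined on these parameter words (lexicographically for $\lessdot_v$, and by a rule that orders edges first by the position of their parameters and then by the $\prec_e$-rank of the pair they encode, with remaining ties broken consistently) so that for every $p$-parameter word $f$ the map $i\mapsto f\cdot h_i$ is simultaneously a graph isomorphism onto an induced subgraph, an order isomorphism for the vertex orders, and an order isomorphism for the edge orders. The point of routing everything through $[A]\binom{N}{2}$ is that each potential edge of a copy is literally a single $2$-parameter word, which is what makes the color-homogenization step go through cleanly.

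For the Ramsey step I would feed an arbitrary $k$-coloring of $E(G)$, viewed as a $k$-coloring of $[A]\binom{N}{2}$, into Theorem~\ref{thm-grahamRotschild} with $r=2$ and $t=p$, obtaining a monochromatic $f\in[A]\binom{N}{p}$ for which $\chi'(f\cdot g)$ is constant over all $g\in[A]\binom{p}{2}$; in particular the $m$ edges $f\cdot g_{ij}$ of the copy spawned by $f$ all receive the same color, and by the previous paragraph this copy is induced and carries both the correct vertex- and edge-orderings. I expect the main obstacle to be the combinatorial design of the incidence and ordering rules on parameter words: one must arrange the adjacency so that the canonical subconfiguration of every $p$-parameter word is induced-isomorphic to $F$ (so that non-edges of $F$ really map to non-edges of $G$), verify that $\lessdot_e$ is a genuine linear order whose restriction to each copy is $\prec_e$, and check that all of this is compatible with the composition operation $f\cdot g$ on which Graham--Rothschild operates. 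This structural bookkeeping, rather than the finiteness conclusion itself, is exactly what upgrades the statement to the induced and fully ordered form, and is where the argument of~\cite{promel13} must be modified.
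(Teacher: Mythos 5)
Your high-level strategy --- apply the Graham--Rothschild Theorem to a graph built on parameter words and read the monochromatic copy off a monochromatic higher-dimensional word --- is the same as the paper's, but your proposal defers the actual construction as an acknowledged ``obstacle,'' and the specific parameters you commit to ($r=2$, $t=p$, vertices as $1$-parameter words with canonical selectors) cannot be made to work. The first problem is inducedness. With your canonical selectors over a one-letter alphabet, the copy spawned by $f\in[A]\binom{N}{p}$ has as vertices the $p$ pairwise disjoint parameter sets $S_1,\dots,S_p$ of $f$, and whether $\{f\cdot h_i,f\cdot h_j\}$ is an edge of $G$ can depend only on the unordered pair of sets $\{S_i,S_j\}$. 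If $F$ has an edge $\{v_i,v_j\}$ and a non-edge $\{v_{i'},v_{j'}\}$, one can choose two $p$-parameter words whose parameter sets in positions $(i,j)$ and $(i',j')$, respectively, form the same pair of subsets of $[N]$; your designation would then have to declare that pair simultaneously an edge and a non-edge. So no ``designated subset of $[A]\binom{N}{2}$'' makes every $p$-parameter word spawn an induced copy of a graph that is neither complete nor empty. The second problem is the edge-order: your rule refers to ``the $\prec_e$-rank of the pair they encode,'' but an edge of $G$ is just a pair of vertices of $G$ and does not globally encode an edge of $F$; you need a globally definable invariant of an edge of $G$ that sorts the $m$ edges of \emph{every} copy according to $\prec_e$, and the disjoint-pair encoding offers nothing beyond the minima of the two endpoint sets, which yields the lexicographic order on each copy rather than an arbitrary prescribed $\prec_e$.

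The paper resolves both problems at once by changing the encoding. Vertices of $G$ are all subsets of $[N]$ and $\{X,Y\}$ is an edge iff $X\cap Y\neq\emptyset$, so an edge corresponds to a $3$-parameter word (via the three sets $X\setminus Y$, $Y\setminus X$, $X\cap Y$); hence $r=3$, and the intersection $X\cap Y$ is the globally definable invariant: $\lessdot_e$ extends the rule $\min(X\cap Y)<\min(U\cap V)$. Graham--Rothschild is applied with $t=n+m$, one parameter per vertex \emph{and} per edge of $F$, and the $i$th vertex of the copy is not the bare set $S_i$ but $F_i=S_i\cup\bigcup_{j\colon v_i\in f_j}S_{n+j}$. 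Then $F_i\cap F_j=S_{n+l}$ precisely when $f_l=\{v_i,v_j\}$ is an edge of $F$, and $F_i\cap F_j=\emptyset$ otherwise, which delivers adjacency, inducedness, and the correct edge-order (since $\min(S_{n+1})<\cdots<\min(S_{n+m})$) simultaneously; each edge of the copy is a word $w\cdot v$ with $v\in[\{0\}]\binom{n+m}{3}$, so monochromaticity follows from the homogeneity of $w$. This blending of edge-parameters into the vertex sets is the missing idea, and it is exactly the ``structural bookkeeping'' your proposal leaves open.
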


Note that we can fix the vertex-ordering of the monochromatic copy as well as the edge-ordering.
Moreover, the obtained monochromatic copy of $F$ is contained in the large graph $G$ as an induced subgraph.
Unfortunately, as we discussed in Section~\ref{sec-ourResults}, the obtained bound on the number of vertices of $G$ is enormous.

\begin{proof}[Proof of Theorem~\ref{thm-finiteInduced}]
We use $n$ and $m$ to denote the number of vertices and edges of $F$, respectively.
Let $G$ be defined as follows.
Choose $N \in \mathbb{N}$ such that $N \geq GR(1,k,3,n+m)$.
Let $2^{[N]}$ be the vertex set of $G$ and let $\{X,Y\}$ with $X,Y \subseteq [N]$ be an edge of $G$ if $X \cap Y \neq \emptyset$.
For any two sets $X,Y \subseteq [N]$ with $\min(X) < \min(Y)$, we set $X \lessdot_v' Y$.
Note that $\lessdot'_v$ is a partial ordering on the vertices of $G$.
We let $\lessdot_v$ be an arbitrary linear extension of $\lessdot'_v$.
For two edges $\{X,Y\}$ and $\{U,V\}$ of $G$ with $\min(X \cap Y) < \min(U \cap V)$, we set $\{X,Y\} \lessdot_e' \{U,V\}$.
Observe that $\lessdot_e'$ defines a partial ordering of the edges of $G$.
We let $\lessdot_e$ be an arbitrary linear extension of $\lessdot'_e$.

We show that $(G,\lessdot_v,\lessdot_e)$ satisfies the statement of the theorem.
We use $v_1 \prec_v \cdots \prec_v v_n$ and $f_1 \prec_e \cdots \prec_e f_m$ to denote the vertices and edges of $F$, respectively.
Let $\chi$ be a $k$-coloring of the edges of $G$.
We use $\chi$ to define a $k$-coloring $\chi'$ of $3$-parameter words of length $N$ over the single-letter alphabet $\{0\}$.
Given such  a word $w \in [\{0\}]\binom{N}{3}$, we set $S_i$, $i \in \{1,2,3\}$, to be the set of positions from $[N]$ on which $w$ contains the $i$th variable symbol $\lambda_i$.
Note that the sets $S_1,S_2,S_3$ are pairwise disjoint, non-empty and, since the first occurrence of $\lambda_i$ precedes the first occurrence of $\lambda_j$ in $w$ for all $i<j$, we also have $\min(S_1)<\min(S_2)<\min(S_3)$.
Setting $X = S_1 \cup S_3$ and $Y = S_2 \cup S_3$, we have two vertices of $G$ that form an edge of $G$ and that satisfy $X \lessdot_v Y$.
We let $\chi'(w) = \chi(\{X,Y\})$.

By the Graham--Rothschild Theorem (Theorem~\ref{thm-grahamRotschild}) applied with $t=n+m$ and $r=3$, there is an $(n+m)$-parameter word $w \in [\{0\}]\binom{N}{n+m}$ such that $\chi'(w \cdot v) = \chi'(w \cdot v')$ for all $v,v' \in [\{0\}]\binom{n+m}{3}$.
Let $b$ be the common color of the words $w \cdot v$ in $\chi'$.
Similarly as before, for every $i \in [n+m]$, we let $S_i \subseteq [N]$ be the set of positions on which $w$ contains the $i$th variable symbol $\lambda_i$.
Again, observe that the sets $S_i$ are pairwise disjoint and satisfy $\min(S_1) < \cdots < \min(S_{n+m})$.
For every $i \in [n]$, we let 
\[F_i = S_i \cup \bigcup_{\substack{j \colon j \in [m],\\ v_i \in f_j}} S_{n+j}.\]

The sets $F_1,\dots,F_n$ then induce a vertex-ordered and edge-ordered graph $(F^*,\prec_v,\prec_e)$ in~$G$.
We show that $F^*$ is a copy of $(F,\prec_v,\prec_e)$ in $(G,\lessdot_v,\lessdot_e)$.
Indeed, since $\min(S_1) < \cdots < \min(S_{n+m})$, we have $\min(F_i) = \min(S_i) < \min(S_j) = \min(F_j)$ if $i<j$ and thus $F_i \lessdot_v F_j$ for all $v_i \prec_v v_j$.
Moreover, since the sets $S_1,\dots,S_{n+m}$ are pairwise disjoint, $\{F_i,F_j\}$ is an edge of $F^*$ if and only if there is an edge $f_l$ of $F$ with $f_l = \{v_i,v_j\}$, which gives $F$ as 
an induced subgraph of $G$.
This is because we have $F_i \cap F_j = S_{n+l}$.
Let $\{F_i,F_j\}$ and $\{F_{i'},F_{j'}\}$ be two edges of $F^*$.
Since $\{F_i,F_j\}$ and $\{F_{i'},F_{j'}\}$ are edges of $G$, the sets $S_{n+l} = F_i \cap F_j$ and $S_{n+l'} = F_{i'} \cap F_{j'}$ correspond to the edges $f_l=\{v_i,v_j\}$ and $f_{l'}=\{v_{i'},v_{j'}\}$ of $F$, respectively, by the definition of $F_1,\dots,F_n$.
Assume $f_l \prec_e f_{l'}$.
Then $\lambda_{n+l}$ precedes $\lambda_{n+l'}$, as $l<l'$, and thus $\min(S_{n+l})<\min(S_{n+l'})$.
It follows from the fact $S_{n+l} = F_i \cap F_j$ and $S_{n+l'} = F_{i'} \cap F_{j'}$ and from the definition of $\lessdot_e$ that $\{F_i,F_j\} \lessdot_e \{F_{i'},F_{j'}\}$ if and only if $f_l \prec_e f_{l'}$.

It remains to show that all edges of $F^*$ are monochromatic in $\chi$.
Let $\{F_i,F_j\}$ be an edge of $F^*$ with $i<j$ and let $S_{n+l} = F_i \cap F_j$.
Note that the sets $F_i \setminus S_{n+l}$, $F_j \setminus S_{n+l}$, and $S_{n+l}$ are nonempty, pairwise disjoint, and satisfy $\min(F_i \setminus S_{n+l}) < \min(F_j \setminus S_{n+l}) < \min(S_{n+l})$.
We let $v \in \{0,\lambda_1,\lambda_2,\lambda_3\}^{n+m}$ be the word with symbols $\lambda_1,\lambda_2,\lambda_3$ on positions from sets
$\{i\} \cup \{n+s \colon v_i \in f_s \neq f_l\}$, $\{j\} \cup \{n+s \colon v_j \in f_s \neq f_l\}$, and $\{n+l\}$ in $w$, respectively.
Then $v \in [\{0\}]\binom{n+m}{3}$ and $w \cdot v \in [\{0\}]\binom{N}{3}$ is the $3$-parameter word with variable symbols $\lambda_1,\lambda_2,\lambda_3$ on positions from $F_i \setminus S_{n+l}$, $F_j \setminus S_{n+l}$, and $S_{n+l}$, respectively.
By the choice of $w$, we have $b= \chi'(w \cdot v) = \chi(\{F_i,F_j\})$.
Thus all edges of $F^*$ have the color $b$ in~$\chi$.
\end{proof}

Now, we obtain Theorem~\ref{thm-edgeOrderedRamseyFinite} as a corollary of Theorem~\ref{thm-finiteInduced}.

\begin{proof}[Proof of Theorem~\ref{thm-edgeOrderedRamseyFinite}]
For a given edge-ordered graph $\mathfrak{G}=(G,\prec)$, let $<$ be an arbitrary ordering of the vertices of $G$.
By Theorem~\ref{thm-finiteInduced}, there is a graph $H$ and orderings $<'$ and $\prec'$ of its vertices and edges, respectively, such that every $2$-coloring of the edges of $H$ contains a monochromatic induced copy of $(G,<,\prec)$.
It thus suffices to consider edge-ordered $\mathfrak{K}_N$ that contains $(H,\prec')$ as an edge-ordered subgraph.
Then every 2-coloring of the edges of $\mathfrak{K}_N$ contains a monochromatic copy of $\mathfrak{G}$ as an edge-ordered subgraph (not necessarily induced).
\end{proof}

\section{Proofs of Theorems~\ref{thm-superexponentialBound} and~\ref{thm-polynomialBound}}
\label{sec-betterBounds}

In this section, we prove both Theorems~\ref{thm-superexponentialBound} and~\ref{thm-polynomialBound}.
That is, we derive a super-exponential upper bound on the edge-ordered Ramsey numbers $\overline{R}_e(\mathfrak{H},\mathfrak{G})$ of two edge-ordered graphs $\mathfrak{H}$ and $\mathfrak{G}$, where $\mathfrak{G}$ is bipartite. 
We then improve this bound under the additional assumption that $\mathfrak{G} \subseteq \mathfrak{K}^{lex}_{n,n}$.

As a first step, we prove the following lemma, which is used in proofs of both Theorem~\ref{thm-superexponentialBound} and~\ref{thm-polynomialBound}.
The proof of this lemma is inspired by a similar ``greedy-embedding'' approach used, for example, in~\cite{bckk13}.

\begin{lemma}
\label{lem-greedyEmbedding}
Let $H$ be a $d$-degenerate graph on $n'$ vertices and let $v_1 \lessdot \dots \lessdot v_{n'}$ be a vertex-ordering of $H$ such that each $v_j$ has at most $d$ neighbors $v_i$ with $i<j$.
Then, for every $t \in \mathbb{N}$, there is $K_N$ with $N=(n')^2t^{d+1}$ and with the vertex set partitioned into $n'$ sets $I_1,\dots,I_{n'}$ of the same size such that the following statement holds.
In every red-blue coloring of the edges of $K_N$, there is a blue copy of $H$ in $K_N$ with a copy of each $v_i$ in $I_i$ or a red copy of $K_{t,t}$ in $K_N$ with each part contained in a different set $I_i$.
\end{lemma}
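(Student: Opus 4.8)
The plan is to build $K_N$ by a greedy/branching construction that anticipates all possible embeddings of the vertices $v_1,\dots,v_{n'}$ while keeping a large pool of ``reserve'' vertices at each stage, so that either we can complete a blue copy of $H$ or we are forced to expose a red $K_{t,t}$. First I would set up the partition: I want $N=(n')^2 t^{d+1}$ vertices split into $n'$ equal-size blocks $I_1,\dots,I_{n'}$, where $I_j$ is the set of candidate vertices for the image of $v_j$. The idea is to process the vertices $v_1,\dots,v_{n'}$ in the given degeneracy order and maintain, for each $j$, a sufficiently rich collection of possible placements of $v_j$ that are consistent (via blue edges) with placements already chosen for its earlier neighbors.

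The key mechanism is as follows. Since $v_j$ has at most $d$ earlier neighbors, once we have fixed images $u_{i}\in I_i$ for those neighbors, a valid blue placement of $v_j$ is a vertex $w\in I_j$ joined in blue to all of those (at most $d$) fixed images. I would argue that if at some step no such $w$ exists with enough ``room'' to continue, then many vertices of $I_j$ must send a red edge to one of the at most $d$ already-placed neighbors, and by a counting/pigeonhole argument on the $t^{d+1}$ factor we can pull out $t$ vertices in $I_j$ all red-joined to a common set of $t$ vertices sitting in one earlier block, giving the red $K_{t,t}$ with parts in distinct blocks $I_i$. The exponent $d+1$ is exactly what lets a $t$-fold branching survive across the $d$ backward neighbors plus one for the current vertex, so that each surviving branch still carries at least $t$ live candidates.

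Concretely, I would phrase the induction in terms of maintaining at stage $j$ a set of at least $t$ consistent partial embeddings (or, dually, a ``candidate tree'' of width $t$), using that $|I_j|=(n')t^{d+1}/n' \cdot$(appropriate factor) is large enough. At each step either the blue branching continues with the required multiplicity, or a failure to branch localizes $t$ red neighbors of $t$ vertices, yielding the monochromatic $K_{t,t}$. Iterating over all $n'$ stages, the blue alternative produces a blue copy of $H$ with $v_i\mapsto I_i$ as required; the bookkeeping that each $I_j$ retains size $\geq t^{d+1}$ divided by the branching already consumed is where the $(n')^2$ factor is spent (one factor of $n'$ for the block sizes, one for summing the losses across stages).

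The main obstacle I expect is the precise bookkeeping that guarantees the branching multiplicity never drops below $t$ before all $n'$ vertices are placed, while simultaneously ensuring that a failure at any stage is clean enough to read off a \emph{complete} red $K_{t,t}$ with its two parts in two \emph{different} blocks $I_i$ (not inside a single block). Balancing these two demands forces the choice of the $t^{d+1}$ per-block size and the $(n')^2$ total, and getting the quantifiers right in the inductive hypothesis—``at least $t$ surviving candidates for $v_j$, each extending a fixed consistent placement of $v_1,\dots,v_{j-1}$''—is the delicate part. The rest is a routine pigeonhole once that invariant is correctly stated.
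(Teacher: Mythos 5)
Your high-level framework matches the paper's: equal blocks $I_j$ of size $n't^{d+1}$ serving as candidate pools, a greedy embedding in the degeneracy order, and the dichotomy ``either the embedding completes or a red $K_{t,t}$ appears between two blocks.'' But the one step that actually makes the lemma work --- how a \emph{failed} embedding step yields a complete red $K_{t,t}$ with \emph{both} parts of size $t$ --- is not correctly identified, and your proposed mechanism would not deliver it. You look \emph{backward} from $v_j$ to its at most $d$ already-placed neighbors and argue that if no valid $w\in I_j$ remains, many vertices of $I_j$ send a red edge to one of those placed neighbors. Each placed neighbor is a \emph{single} vertex, so pigeonhole only gives a red star $K_{1,\text{many}}$, not a $K_{t,t}$; your phrase ``a common set of $t$ vertices sitting in one earlier block'' has no source, since nothing in your setup keeps $t$ live vertices on the earlier side at the moment of failure. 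Your fallback --- maintaining $t$ parallel consistent partial embeddings --- does not repair this: the $t$ embeddings place $v_i$ at $t$ different vertices, and the vertices of $I_j$ that fail against one embedding need not fail against the others, so the failures do not align into a biclique.

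The paper's argument is \emph{forward-looking} and this is exactly what supplies the second side of the biclique. When placing $v_i$, one keeps for every not-yet-placed neighbor $v_j$ (with $j>i$) a large candidate set $C_j$, and demands that all but at most $t-1$ vertices of the current set $C_i$ have at least $|C_j|/t$ blue neighbors in $C_j$. If this holds for all relevant $j$, a vertex $h(v_i)$ good for all of them exists (since $|C_i|\ge n't > (n'-1)(t-1)$), and each $C_j$ is replaced by the blue neighborhood of $h(v_i)$ in $C_j$, shrinking by at most a factor of $t$; degeneracy ensures each $C_j$ shrinks at most $d$ times, which is where $t^{d+1}$ is spent. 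If it fails, one finds $t$ vertices of the still-large set $C_i$ each with fewer than $|C_j|/t$ blue neighbors in $C_j$; deleting all their blue neighbors from $C_j$ removes fewer than $t\cdot(|C_j|/t)$ vertices, so at least $t$ vertices of $C_j$ survive and are joined in red to all $t$ chosen vertices of $C_i$. Both sides of the red $K_{t,t}$ thus come from large candidate sets, not from placed vertices, and the threshold $|C_j|/t$ is calibrated precisely so that the union bound over $t$ bad vertices leaves $t$ survivors. Without this (or an equivalent) mechanism your proof does not close.
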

\begin{proof}
Let $v_1 \lessdot \dots \lessdot v_{n'}$ be a vertex-ordering of $H$ such that each $v_j$ has at most $d$ neighbors $v_i$ with $i<j$.
Such an ordering exists, as $H$ is $d$-degenerate.
For $N = (n')^2t^{d+1}$ and for every vertex $v_i$ of $H$, let $I_i$ be a set of vertices such that $|I_i| = M = n't^{d+1}$ and let the disjoint union $I_1 \cup \cdots \cup I_{n'}$ be the vertex set of $K_N$.

Let $\chi$ be a red-blue coloring of the edges of $K_N$.
We now try to greedily embed a blue copy of $H$ on vertices $h(v_1),\dots,h(v_{n'})$ in $\chi$ such that $h(v_i) \in I_i$ for every $i \in [n']$.
We proceed so that if the embedding fails at some step, we obtain a red copy of $K_{t,t}$ in $\chi$ with each part contained in a different set $I_i$.

For each $i \in [n']$, let $C_i$ be a set of candidates for the vertex $h(v_i)$.
Initially, we set $C_i = I_i$.
We then proceed in steps $i=1,\dots,n'$, assuming that we have already determined the vertices $h(v_1),\dots,h(v_{i-1})$ in steps $1,\dots,i-1$, respectively.

In step $i$, assume that, for every neighbor $v_j$ of $v_i$ in $H$ with $i<j$, all but at most $t-1$ vertices from $C_i$ have at least $|C_j|/t$ blue neighbors in $C_j$.
In such a case, if $|C_i| \geq n't$, then there is a vertex of $C_i$ that has at least $|C_j|/t$ blue neighbors in each $C_j$ such that $v_j$ is a neighbor of $v_i$ in $H$ with $i<j$.
This is because $v_i$ has at most $n'-1$ neighbors $v_j$ in $H$ with $i<j$ and 
\[|C_i| - (n'-1)(t-1) \geq n't-(n'-1)(t-1) > 0.\]

We let $h(v_i)$ be an arbitrary such vertex from $C_i$ and we update $C_j$ to be the blue neighborhood of $h(v_i)$ in $C_j$.
Thus the size of $C_j$ decreases at most by a multiplicative factor of $t$ during each update.
We update the set $C_j$ so that $|C_j|$ is a multiple of $t$.
Note that each set $C_j$ is updated at most $d$ times, as we update each $C_j$ for every neighbor $v_i$ of $v_j$ in $H$ with $i<j$ and there are at most $d$ such neighbors of $v_j$, as $H$ is $d$-degenerate.
Since $|I_i| = n't^{d+1}$, we indeed get $|C_i| \geq n't$ after all updates.

If we manage to find $h(v_{n'})$, then the vertices $h(v_1),\dots,h(v_{n'})$ induce a graph that contains a blue copy of $H$ in $\chi$, as $h(v_i)$ is connected to every $h(v_j)$ with a blue edge for every $\{v_i,v_j\}\in E(H)$.
Note that $h(v_i) \in C_i \subseteq I_i$ for every $i \in [n']$.

Thus it suffices to consider the case when we cannot find the vertex $h(v_i)$ in some step~$i$.
That is, there is a neighbor $v_j$ of $v_i$ in $H$ with $i<j$ such that $C_i$ contains a set $W$ of $t$ vertices, each having at most 
$|C_j|/t-1$ blue neighbors in $C_j$.
Then, for each $w \in W$, we remove all blue neighbors of $w$ from $C_j$.
The total number of vertices that stay in $C_j$ after the removal is at least $|C_j| - t \cdot (|C_j|/t - 1) = t$.
Together with $W$, this $t$-tuple of vertices induces a red copy of $K_{t,t}$ in $\chi$ between $C_i \subseteq I_i$ and $C_j \subseteq I_j$.
\end{proof}

We now proceed with the proof of Theorem~\ref{thm-superexponentialBound}.
The proof is based on a probabilistic argument, which uses the following Chernoff-type inequality.

\begin{theorem}[Chernoff bound~\cite{mitzUpf17}] 
\label{thm-chernoff}
Let $X = \sum_{i=1}^k X_i$ be a random variable, where $\Pr[X_i = 1]=p_i$ and $\Pr[X_i = 0] = 1-p_i$ for every $i \in [k]$ and all $X_i$ are independent.
Let $\mu = \mathbb{E}(X) = \sum_{i=1}^k p_i$.
Then, for every $\delta \in (0,1)$,
\[\Pr[X \leq (1-\delta)\mu] \leq e^{-\mu\delta^2/2}.\]
\end{theorem}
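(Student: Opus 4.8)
The plan is to use the \emph{exponential moment method} (the standard Chernoff/Bernstein route via Markov's inequality) for sums of independent indicator variables. Since we are bounding the \emph{lower} tail $\Pr[X \le (1-\delta)\mu]$, I would work with the random variable $e^{-tX}$ for a parameter $t>0$ to be optimized at the end. For any fixed $t>0$, the event $X \le (1-\delta)\mu$ coincides with $e^{-tX} \ge e^{-t(1-\delta)\mu}$, so Markov's inequality applied to the nonnegative variable $e^{-tX}$ gives
\[\Pr[X \le (1-\delta)\mu] \le \frac{\mathbb{E}\!\left[e^{-tX}\right]}{e^{-t(1-\delta)\mu}} = e^{t(1-\delta)\mu}\,\mathbb{E}\!\left[e^{-tX}\right].\]

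Next I would exploit independence to factor the moment generating function. Since $X=\sum_{i=1}^k X_i$ with the $X_i$ independent, we have $\mathbb{E}[e^{-tX}]=\prod_{i=1}^k \mathbb{E}[e^{-tX_i}]$. For each Bernoulli variable, $\mathbb{E}[e^{-tX_i}]=p_i e^{-t}+(1-p_i)=1+p_i(e^{-t}-1)$, and the elementary inequality $1+x\le e^x$ with $x=p_i(e^{-t}-1)$ yields $\mathbb{E}[e^{-tX_i}]\le \exp\!\big(p_i(e^{-t}-1)\big)$. Multiplying these bounds and recalling $\mu=\sum_i p_i$ gives $\mathbb{E}[e^{-tX}]\le \exp\!\big(\mu(e^{-t}-1)\big)$, so combining with the display above produces, for every $t>0$,
\[\Pr[X \le (1-\delta)\mu] \le \exp\!\Big(\mu\big(t(1-\delta)+e^{-t}-1\big)\Big).\]

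Finally I would optimize the exponent over $t$. Differentiating $t(1-\delta)+e^{-t}-1$ and setting the derivative to zero gives $e^{-t}=1-\delta$, i.e.\ $t=\ln\frac{1}{1-\delta}$, which is positive since $\delta\in(0,1)$; substituting back produces the sharp bound
\[\Pr[X \le (1-\delta)\mu] \le \left(\frac{e^{-\delta}}{(1-\delta)^{1-\delta}}\right)^{\!\mu} = \exp\!\Big(\mu\big(-\delta-(1-\delta)\ln(1-\delta)\big)\Big).\]
The one remaining step, which I expect to be the main (though still routine) obstacle, is to replace this unwieldy expression with the clean quadratic bound $e^{-\mu\delta^2/2}$. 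For this it suffices to prove the single-variable inequality $-\delta-(1-\delta)\ln(1-\delta)\le -\delta^2/2$ on $[0,1)$, equivalently that $g(\delta):=(1-\delta)\ln(1-\delta)+\delta-\delta^2/2\ge 0$ there. I would verify this by checking $g(0)=0$, computing $g'(\delta)=-\ln(1-\delta)-\delta$ so that $g'(0)=0$, and observing $g''(\delta)=\delta/(1-\delta)\ge 0$ on $[0,1)$; hence $g'$ is nondecreasing and nonnegative, so $g$ is nondecreasing from $g(0)=0$ and therefore nonnegative, which gives the desired bound.
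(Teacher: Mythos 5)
Your proof is correct: the exponential-moment argument (Markov applied to $e^{-tX}$, factorization of the moment generating function via independence and $1+x\le e^x$, optimization at $e^{-t}=1-\delta$, and the calculus verification that $(1-\delta)\ln(1-\delta)+\delta-\delta^2/2\ge 0$ on $[0,1)$) is complete and sound. Note that the paper does not prove this statement at all --- it quotes it as a known Chernoff bound from the textbook of Mitzenmacher and Upfal --- and your argument is exactly the standard proof found there, so you have simply supplied the proof the paper omits by citation.
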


We now state and prove the last auxiliary result needed in the proof of Theorem~\ref{thm-superexponentialBound}.

\begin{lemma}
\label{lem-orderingSuperexponentialBound}
Let $\mathfrak{G}$ be a bipartite edge-ordered graph with $m$ edges and with both parts having $n$ vertices.
For positive integers $t$ and $M$ that satisfy
\[\binom{M}{t}^2 \cdot e^{-t^2/(3n^2m!)} < 1,\]
there is an edge-ordering $<$ of $K_{M,M}$ such that every copy of $(K_{t,t},<)$ in $(K_{M,M},<)$ contains a copy of $\mathfrak{G}$ as an edge-ordered subgraph.
\end{lemma}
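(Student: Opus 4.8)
The plan is to choose the edge-ordering of $K_{M,M}$ at random and to show that, with positive probability, every copy of $K_{t,t}$ sitting inside it induces an edge-ordering that already contains $\mathfrak{G}$. Concretely, I would assign to each edge of $K_{M,M}$ an independent uniform label in $[0,1]$ and let $<$ order the edges by increasing label. This makes $<$ a uniformly random linear order, and the key structural feature is that the orders it induces on any two \emph{edge-disjoint} sets of edges are mutually independent (the labels on disjoint sets are independent). The goal is then to bound, for one fixed copy of $K_{t,t}$, the probability that it fails to contain $\mathfrak{G}$, and finally to quantify over all copies by a union bound over the $\binom{M}{t}^2$ choices of $t$ vertices in each part, which is exactly the quantity appearing in the hypothesis.

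The heart of the argument is a packing observation. Inside a fixed $K_{t,t}$ with parts $A$ and $B$, partition $A$ and $B$ each into $p=\lfloor t/n\rfloor$ blocks of $n$ vertices. For each pair of blocks $(A_i,B_j)$, the bipartite graph between them is a $K_{n,n}$, into which I fix one embedding of $G$ (say one part of $G$ into $A_i$ and the other into $B_j$). Since distinct pairs $(i,j)$ use edges lying between distinct pairs of blocks, the resulting $p^2$ copies of $G$ are pairwise edge-disjoint. For a fixed such copy $c$, let $X_c$ be the indicator that the $m$ edges of $c$, labelled through the embedding by the edges of $\mathfrak{G}$, appear in $<$ in exactly the relative order prescribed by $\prec$. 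By symmetry of a uniform random order, $\Pr[X_c=1]=1/m!$, and edge-disjointness makes the variables $X_c$ independent.

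To finish, note that the fixed $K_{t,t}$ fails to contain $\mathfrak{G}$ only if $X:=\sum_c X_c=0$. Taking $t$ to be a multiple of $n$ (as in the intended application $t=3n^{10}m!$) so that $p=t/n$, one has $\mu:=\mathbb{E}[X]=t^2/(n^2 m!)$, and the Chernoff bound (Theorem~\ref{thm-chernoff}) applied with a suitable $\delta$ close to $1$ gives $\Pr[X=0]\le e^{-\mu/2}=e^{-t^2/(2n^2 m!)}\le e^{-t^2/(3n^2 m!)}$. The comfortable gap between the exponents $1/2$ and $1/3$ is precisely what absorbs the loss from $\lfloor t/n\rfloor$ when $n\nmid t$. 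A union bound over the $\binom{M}{t}^2$ copies of $K_{t,t}$ then bounds the probability that some copy is bad by $\binom{M}{t}^2\,e^{-t^2/(3n^2 m!)}<1$, which is the hypothesis of the lemma; hence an ordering in which every $K_{t,t}$ contains $\mathfrak{G}$ exists.

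The step I expect to require the most care is the interplay between packing and independence: one must check that the $p^2$ copies are genuinely edge-disjoint, so that the per-copy probability $1/m!$ can be combined through independence, and that enough copies fit to push the exponent up to $t^2/(3n^2 m!)$. This is where the constant $3$ and the implicit regime $t\gg n$ enter. The remaining ingredients are routine: the $1/m!$ probability is immediate from the symmetry of a uniform order, and the final union bound is bookkeeping matching the hypothesis.
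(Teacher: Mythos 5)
Your proposal is correct and follows essentially the same route as the paper: a uniformly random edge-ordering, a packing of each $K_{t,t}$ into $(t/n)^2$ edge-disjoint copies of $K_{n,n}$ each containing $\mathfrak{G}$ with probability at least $1/m!$, independence from edge-disjointness, the Chernoff bound to get $\Pr[X=0]\le e^{-t^2/(3n^2m!)}$, and a union bound over the $\binom{M}{t}^2$ copies of $K_{t,t}$. The only (harmless) differences are that you fix one embedding per block instead of asking whether the block contains some copy, and that you are slightly more explicit about absorbing the $\lfloor t/n\rfloor$ rounding into the constant $3$.
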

\begin{proof}
Let $<$ be the ordering of the edges of $K_{M,M}$ chosen independently and uniformly from the set of all edge-orderings of $K_{M,M}$.
The probability that a copy of $(K_{n,n},<)$ in $(K_{M,M},<)$ contains a copy of $\mathfrak{G}$ is at least $1/m!$.
For a copy of $K_{t,t}$ in $K_{M,M}$, fix a decomposition of $K_{t,t}$ into copies $B_1,\dots,B_k$ of $K_{n,n}$ where any two of them share at most a single edge.
Note that $k \geq (t/n)^2$, as we can partition each part of $K_{t,t}$ into $t/n$ sets of size $n$ and then consider copies of $K_{n,n}$ induced by these parts.
For $i \in [k]$, let $X_i$ be the random variable such that $X_i = 1$ if $(B_i,<)$ contains a copy of $\mathfrak{G}$ and let $X_i=0$ otherwise.
Then $\Pr[X_i=1]  \geq 1/m!$.
Since any two copies $B_i$ and $B_j$ share at most a single edge, all the variables $X_i$ are independent.
Let $X=\sum_{i=1}^kX_i$ and note that 
\[\mathbb{E}(X) = \sum_{i=1}^k \Pr[X_i = 1] \geq k/m! \geq \frac{t^2}{n^2m!}.\]
Clearly, the probability that a copy of $(K_{t,t},<)$ in $(K_{M,M},<)$ does not contain a copy of $\mathfrak{G}$ is at most $\Pr[X = 0]$.
By Theorem~\ref{thm-chernoff}, 
\[\Pr[X=0] \leq e^{-\mathbb{E}(X)/3}\leq e^{-t^2/(3n^2m!)}.\]
The number of copies of $K_{t,t}$ in $K_{M,M}$ is $\binom{M}{t}^2$.
The expected number of copies of $(K_{t,t},<)$ in $(K_{M,M},<)$ that do not contain a copy of $\mathfrak{G}$ is thus at most 
\[\binom{M}{t}^2 \cdot e^{-t^2/(3n^2m!)},\]
which is less than $1$ according to our assumptions.
Thus there is an edge-ordering $<$ of $K_{M,M}$ such that every copy of $(K_{t,t},<)$ in $(K_{M,M},<)$ contains a copy of $\mathfrak{G}$.
\end{proof}

We now combine Lemmas~\ref{lem-greedyEmbedding} and~\ref{lem-orderingSuperexponentialBound} to prove Theorem~\ref{thm-superexponentialBound}.

\begin{proof}[Proof of Theorem~\ref{thm-superexponentialBound}]
Let $\mathfrak{H} = (H,\prec_1)$ be a $d$-degenerate edge-ordered graph on $n'$ vertices and let $\mathfrak{G}=(G,\prec_2)$ be a bipartite edge-ordered graph with $m$ edges and with both parts having $n$ vertices.
We set $t=3n^{10}m!$, $N = (n')^2t^{d+1}$, and $M = N/n'$.
Assuming $d \leq n$ and $n' \leq t^{d+1}$, we show
\[\overline{R}_e(\mathfrak{H},\mathfrak{G}) \leq N.\]

We construct an edge-ordered complete graph $\mathfrak{K}_N=(K_N, <)$ such that every red-blue coloring of the edges of $\mathfrak{K}_N$ contains either a blue copy of $\mathfrak{H}$ or a red copy of $\mathfrak{G}$.
Let $v_1, \dots, v_{n'}$ be a vertex-ordering of $H$ such that each $v_j$ has at most $d$ neighbors $v_i$ with $i<j$.
Such an ordering exists, as $H$ is $d$-degenerate.
For every vertex $v_i$ of $H$, let $I_i$ be a set of vertices such that $|I_i| = M = n't^{d+1}$ and let the disjoint union $I_1 \cup \cdots \cup I_{n'}$ be the vertex set of $\mathfrak{K}_N$.

By Lemma~\ref{lem-orderingSuperexponentialBound}, if $\binom{M}{t}^2 \cdot e^{-t^2/(3n^2m!)} < 1$, then there is an edge-ordering $<'$ of $K_{M,M}$ such that every copy of $(K_{t,t},<')$ in $(K_{M,M},<')$ contains a copy of $\mathfrak{G}$.
We have 
\[\binom{M}{t}^2  \leq M^{2t} = (n')^{2t}t^{2t(d+1)},\]
thus it suffices to show that the expression
\[(n')^{2t}t^{2t(d+1)} \cdot e^{-t^2/(3n^2m!)} \leq  e^{2t\log{n'} + 2t(d+1)\log{t} - t^2/(3n^2m!)}\]
is less than $1$. 
From the choice of $t$ and the fact $m \leq n^2$, we have $\log{t} \leq 5n^2\log{n}$ for $n \geq 2$.
Also, since $\log{n'} \leq (d+1)\log{t}$ and $d \leq n$, we can bound the exponent in the above expression from above by 
\[4t(d+1)\log{t} - t^2/(3n^2m!) \leq t(20(d+1)n^2\log{n} - n^8) < 0\]
for $n \geq 2$.
Thus there is an edge-ordering $<'$ of $K_{M,M}$ such that every copy of $(K_{t,t},<')$ in $(K_{M,M},<')$ contains a copy of $\mathfrak{G}$.

We now define the edge-ordering $<$ of $K_N$.
Let $\prec'_1$ be an arbitrary linear ordering of the edges of $K_{n'}$ with the same vertex set as $H$ such that $\prec'
_1$ contains $\prec_1$.
For two edges $e=\{u,v\}$ and $f=\{x,y\}$ with $u \in I_i$, $v \in I_j$ and $x \in I_k$, $y \in I_l$, where $i \neq j$, $k \neq l$ and $|\{i,j\} \cap \{k,l\}| \leq 1$, we set $e < f$ if and only if $\{v_i,v_j\} \prec'_1 \{v_k,v_l\}$.
That is, $<$ is a \emph{blow-up} of $\prec'_1$ on $I_1,\dots,I_{n'}$.
For all $i$ and $j$ with $1 \leq i < j \leq n'$, we let $<$ be the ordering $<'$ on the complete bipartite graph induced by $I_i \cup I_j$.
Finally, we order the rest of the edges of $K_N$ arbitrarily, obtaining the edge-ordered graph $\mathfrak{K}_N = (K_N,<)$.

Let $\chi$ be a red-blue coloring of the edges of $\mathfrak{K}_N$.
By Lemma~\ref{lem-greedyEmbedding}, there is a blue copy of $H$ in $\chi$ with an image of each $v_i$ in $I_i$ or a red copy of $K_{t,t}$ in $\chi$ with each part contained in a different set $I_i$.
In the first case, since copy of each $v_i$ lies in $I_i$ and $<$ is a blow-up of $\prec'_1$ on $I_1,\dots,I_{n'}$, this copy of $H$ has edge-ordering isomorphic to $\prec_1$ and we obtain a blue copy of~$\mathfrak{H}$ in~$\chi$.

In the second case, we have a red copy of $K_{t,t}$ between $I_i$ and $I_j$.
Since $<$ corresponds to the ordering $<'$ on the edges between $I_i$ and $I_j$ and, by the choice of $<'$, all copies of $(K_{t,t},<')$ between $I_i$ and $I_j$ contain a copy of $\mathfrak{G}$, we obtain a red copy of $\mathfrak{G}$ in~$\chi$.
\end{proof}

The proof of Theorem~\ref{thm-polynomialBound} is also carried out using Lemma~\ref{lem-greedyEmbedding}.
However, since we are working with lexicographically edge-ordered graph, we can order edges between two sets $I_i$ and $I_j$ lexicographically and use the fact that $\mathfrak{G}$ is an edge-ordered subgraph of $\mathfrak{K}^{lex}_{m,n}$.
The rest of the proof of Theorem~\ref{thm-polynomialBound} is then analogous to the proof of Theorem~\ref{thm-superexponentialBound}.

\begin{proof}[Proof of Theorem~\ref{thm-polynomialBound}]
Let $\mathfrak{H}$ be a $d$-degenerate edge-ordered graph on $n'$ vertices and let $\mathfrak{G}$ be an edge-ordered subgraph of $\mathfrak{K}^{lex}_{n,n}$.
We set $N = (n')^2 n^{d+1}$ and show that $\overline{R}_e(\mathfrak{G},\mathfrak{H}) \leq N$ by constructing an edge-ordered complete graph $\mathfrak{K}_N=(K_N, <)$ such that every red-blue coloring of the edges of $\mathfrak{K}_N$ contains either a blue copy of $\mathfrak{H}$ or a red copy of $\mathfrak{G}$.

Letting $v_1, \dots, v_{n'}$ be a vertex-ordering of $H$ such that each $v_j$ has at most $d$ neighbors $v_i$ with $i<j$, for every vertex $v_i$ of $H$, we let $I_i$ be a set of vertices such that $|I_i| = N/n'$.
We again let the disjoint union $I_1 \cup \cdots \cup I_{n'}$ be the vertex set of $\mathfrak{K}_N$.
Let $\prec'_1$ be an arbitrary linear ordering of the edges of $K_{n'}$ with the same vertex set such that $\prec'
_1$ contains $\prec_1$.
We let $<$ be the edge-ordering of $K_N$ that is a blow-up of $\prec'_1$ on $I_1,\dots,I_{n'}$ and we order edges between two sets $I_i$ and $I_j$ so that they determine a copy of $\mathfrak{K}^{lex}_{|I_i|,|I_j|}$.

Again, for every red-blue coloring $\chi$ of the edges of $\mathfrak{K}_N$, Lemma~\ref{lem-greedyEmbedding} gives a blue copy of~$H$ in $\chi$ with an image of each $v_i$ in $I_i$ or a red copy of $K_{n,n}$ in $\chi$ with each part contained in a different set $I_i$.
In the first case, we obtain a blue copy of~$\mathfrak{H}$ as before.
In the second case, since the edges between any two sets $J_i \subseteq I_i$ and $J_j \subseteq I_j$ determine a copy of $\mathfrak{K}^{lex}_{|J_i|,|J_j|}$, we obtain a red copy of $\mathfrak{G}$, as $\mathfrak{G}$ is an edge-ordered subgraph of $\mathfrak{K}^{lex}_{n,n}$.
\end{proof}

\section{Lexicographic edge-ordered Ramsey numbers}
\label{sec-lexicographic}

Here, we include proofs of all statements about lexicographically edge-ordered graphs from Section~\ref{sec-ourResults}.
We start with a simple proof of Lemma~\ref{lem-lexicographicVertex}.

\begin{proof}[Proof of Lemma~\ref{lem-lexicographicVertex}]
For a lexicographically edge-ordered graph $\mathfrak{G}=(G,\prec)$ with the vertex set $V$, let $f \colon V \to \{1,\dots,|V|\}$ be any one-to-one correspondence consistent with $\mathfrak{G}$ and let $\mathcal{G}_f$ be the vertex-ordering of $G$ determined by~$f$.
More specifically, the vertex-ordering $\mathcal{G}_f=(G,<')$ is chosen such that $u<'v$ if and only if $f(u)<f(v)$.
Without loss of generality, the edge-ordered graph $\mathfrak{K}^{lex}_N$ has the vertex set $[N]$.

We show that every copy of $\mathcal{G}_f$ in $\mathcal{K}_N$ on $[N]$ determines a copy of $\mathfrak{G}$ in $\mathfrak{K}^{lex}_N$.
Let $i \colon V \to [N]$ be an inclusion witnessing that $\mathcal{G}_f$ is an ordered subgraph of $\mathcal{K}_N$.
That is, $i(u)<i(v)$ if and only if $f(u)<f(v)$ for all $u,v \in V$.
Then $\mathfrak{G}$ is an edge-ordered subgraph of $\mathfrak{K}^{lex}_N$, since, for edges $\{u,v\}$ and $\{w,t\}$ of $G$ with $f(u)<f(v)$ and $f(w)<f(t)$, we have $\{u,v\} \prec \{w,t\}$ if and only if $f(u)<f(w)$ or $(f(u)=f(w) \;\&\; f(v) < f(t))$.
This is true if and only if $i(u)<i(w)$ or $(i(u)=i(w) \;\&\; i(v) < i(t))$, which corresponds to $\{i(u),i(v)\}$ preceding $\{i(w),i(t)\}$ in $\mathfrak{K}^{lex}_N$.

Thus, given a $2$-coloring $\chi$ of the edges of $\mathfrak{K}^{lex}_N$ with $N = \overline{R}(\mathcal{G}_f)$, if we consider $\chi$ as a $2$-coloring of the edges of $\mathcal{K}_N$, we obtain a monochromatic copy of $\mathcal{G}_f$ in $\chi$.
Since every copy of $\mathcal{G}_f$ in $\mathcal{K}_N$ on $[N]$ determines a copy of $\mathfrak{G}$ in $\mathfrak{K}^{lex}_N$, we obtain a monochromatic copy of $\mathfrak{G}$ in $\chi$.
Since $f$ is an arbitrary mapping consistent with $\mathfrak{G}$, it follows that $\overline{R}_{lex}(\mathfrak{G}) \leq \overline{R}(\mathcal{G}_f)$ and finishes the proof of Lemma~\ref{lem-lexicographicVertex}.
\end{proof}

Using Lemma~\ref{lem-lexicographicVertex}, we now present a proof of Proposition~\ref{prop-lexicographicPath}.
That is, we show that $\overline{R}_{lex}(\mathfrak{P}_n) \leq 2n-3+\sqrt{2n^2-8n+11}$ for every $n > 2$, where $\mathfrak{P}_n$ is the edge-monotone path on $n$ vertices.

\begin{proof}[Proof of Proposition~\ref{prop-lexicographicPath}]
The proof is based on the same idea used in~\cite[Proposition~15]{bckk13}.
Let $N$ be a positive integer and assume that there is a $2$-coloring of the edges of $\mathfrak{K}^{lex}_N$ with no monochromatic copy of $\mathfrak{P}_n$.
Let $[N]$ be the vertex set of $\mathfrak{K}^{lex}_N$ and assume that the vertex-order $<$ is determined by the lexicographic edge-ordering of $K_N$.
Without loss of generality, we assume that at least half of the edges with one vertex from $\left[\left\lceil \frac{N}{2}\right\rceil\right]$ and the other one in $\left\{\left\lceil \frac{N}{2}\right\rceil+1,\dots,N\right\}$ are colored red.
Let $M$ be an $\lceil \frac{n}{2} \rceil \times \lfloor \frac{n}{2} \rfloor$ matrix with entries from $\{0,1\}$ such that it contains $1$-entries on positions $(i,i)$ and $(i+1,i)$ and $0$-entries otherwise.
Note that $M$ naturally corresponds to an ordered path $\mathcal{P} = (P,<)$ on vertices $1<\cdots<n$ by connecting $i$ and $\lceil n/2 \rceil + j$ with an edge if and only if the $(i,j)$ entry of $M$ is $1$.
Also the identity on the vertex set $[n]$ of $\mathcal{P}$ is consistent with the lexicographic ordering of the edges of $\mathcal{P}$; see Figure~\ref{fig-monotone} for an example.
Thus the matrix $M$ represents the edge-monotone path $\mathfrak{P}_n$.

Let $A$ be an $\left\lceil \frac{N}{2} \right\rceil \times \left\lfloor \frac{N}{2} \right\rfloor$ matrix with entries from $\{0,1\}$ such that $A$ contains $1$-entries on positions $\left(i,j\right)$, where $\{i,\left\lceil \frac{N}{2} \right\rceil+j\}$ is a red edge of $\mathfrak{K}^{lex}_N$, and $0$-entries otherwise.
We say that $A$ \emph{contains} the matrix $M$ if $A$ contains a submatrix that has $1$-entries at all the positions where $M$ does.
Observe that, since $M$ represents the edge-monotone path $\mathfrak{P}_n$, the matrix $A$ cannot contain $M$ as otherwise $\mathfrak{K}^{lex}_N$ contains a red copy of $\mathfrak{P}_n$.

It follows from a result of F\"{u}redi and Hajnal~\cite{fur92} (see also Lemma~16 in~\cite{bckk13}) that $A$ contains at most 
\begin{align*}
\left(\left\lfloor \frac{n}{2}\right\rfloor-1\right)\left\lceil \frac{N}{2}\right\rceil+\left(\left\lceil \frac{n}{2}\right\rceil-1\right)\left\lfloor \frac{N}{2}\right\rfloor
&-\left(\left\lceil \frac{n}{2}\right\rceil-1\right)\left(\left\lfloor \frac{n}{2}\right\rfloor-1\right) \\
&\le \frac{2nN+4n-4N-3-n^2}{4}
\end{align*}
$1$-entries, as $M$ is so-called \emph{minimalist} matrix.
On the other hand, since red was the major color among edges between $\left[\left\lceil \frac{N}{2}\right\rceil\right]$ and $\left\{\left\lceil \frac{N}{2}\right\rceil+1,\dots,N\right\}$, we have at least $\frac{1}{2}\left\lceil \frac{N}{2}\right\rceil\left\lfloor \frac{N}{2}\right\rfloor \geq (N^2-1)/8$ such red edges and thus also at least that many $1$-entries in $A$.
Altogether, we have the inequality $(2nN+4n-4N-3-n^2)/4 < (N^2-1)/8$, which gives $N \leq 2n-4+\sqrt{2n^2-8n+11}$.
\end{proof}

\section{Open problems}
\label{sec-openProblems}

Many questions about edge-ordered Ramsey numbers remain open, for example proving a better upper bound on edge-ordered Ramsey numbers than the one obtained in the proof of Theorem~\ref{thm-edgeOrderedRamseyFinite}.
Since edge-ordered Ramsey numbers do not increase by removing edges of a given graph, it suffices to focus on edge-ordered complete graphs.
It is possible that edge-ordered Ramsey numbers of edge-ordered complete graphs do not grow significantly faster than the standard Ramsey numbers.

\begin{problem}
\label{prob-lowerBoundClique}
Is there a constant $C$ such that, for every $n \in \mathbb{N}$ and every edge-ordered complete graph $\mathfrak{K}_n$ on $n$ vertices, we have $\overline{R}_e(\mathfrak{K}_n) \leq 2^{C n}$?
\end{problem}

We note that, very recently, Fox and Li~\cite{foxLi19} showed that $\overline{R}_e(\mathfrak{H}) \leq 2^{100n^2\log^2{n}}$ for every edge-ordered graph $\mathfrak{H}$ on $n$ vertices.

It might also be interesting to consider sparser graphs, for example graphs with maximum degree bounded by a fixed constant, and try to prove better upper bounds on their edge-ordered Ramsey numbers.

We have no non-trivial results about lower bounds on edge-ordered Ramsey numbers and even lexicographically edge-ordered Ramsey numbers.
Proving lower bounds for the latter might be simpler, as one has to consider only the lexicographic edge-ordering of the large complete graph.
Is there a class of graphs with maximum degree bounded by a fixed constant such that their corresponding edge-ordered Ramsey numbers grow superlinearly in the number of vertices?
If so, then such a result would contrast with the famous result of Chv\'{a}tal, R\"{o}dl, Szemer\'{e}di, and Trotter~\cite{crst83}, which says that Ramsey numbers of bounded-degree graphs grow at most linearly in the number of vertices.

Another interesting open problem is to determine the growth rate of the function $T(n)$ from Theorem~\ref{thm-canonical}.
The current upper bound on $T(n)$ is quite large as the proof of Ne\v{s}et\v{r}il and R\"{o}dl~\cite{nesetrilRodl17} uses Ramsey's theorem for quadruples and $6!=720$ colors.

Finally, we showed that the inequality in Lemma~\ref{lem-lexicographicVertex} is not always tight using examples with stars, where both sides of the inequality differ by $1$.
It is a natural question to ask how wide this gap can be.
In particular, is there a class of graphs for which the ratio between both sides of the inequality in Lemma~\ref{lem-lexicographicVertex} is arbitrarily large?

\paragraph{Acknowledgement}

We would like to thank to the referee for his/her careful reading and comments, to Jan Kyn\v{c}l for interesting discussions about canonical edge-orderings and also to the organizers of the Eml\'{e}kt\'{a}bla Workshop 2018, where the research was initiated.

\bibliography{bibliography}	
\bibliographystyle{plain}

\end{document}